\DeclareMathAlphabet{\mathpzc}{OT1}{pzc}{m}{it}
\newtheorem{theorem}{Theorem}[section]
\newtheorem{proposition}{Proposition}[section]
\newtheorem{corollary}{Corollary}[theorem]
\newtheorem{lemma}[theorem]{Lemma}
\newtheorem*{remark}{Remark}
\newcommand*\pFq[6][8]{%
  \begingroup 
  \pFqmuskip=#1mu\relax
  \mathcode`\,=\string"8000
  \begingroup\lccode`\~=`\,
  \lowercase{\endgroup\let~}\pFqcomma
  {}_{#2}F_{#3}{\left[\genfrac..{0pt}{}{#4}{#5};#6\right]}%
  \endgroup
}
\newcommand{\pFqcomma}{\mskip\pFqmuskip}
\newsavebox{\@brx}
\newcommand{\llangle}[1][]{\savebox{\@brx}{\(\m@th{#1\langle}\)}%
  \mathopen{\copy\@brx\kern-0.5\wd\@brx\usebox{\@brx}}}
\newcommand{\rrangle}[1][]{\savebox{\@brx}{\(\m@th{#1\rangle}\)}%
  \mathclose{\copy\@brx\kern-0.5\wd\@brx\usebox{\@brx}}}
\begin{document}

\title{The Terwilliger algebra of symplectic dual polar graphs, the subspace lattices and $U_q(sl_2)$}
\author[1]{Pierre-Antoine Bernard}
\author[2]{Nicolas Crampé}
\author[1,3]{Luc Vinet}
\affil[1]{Centre de recherches mathématiques, Université de Montréal, P.O. Box 6128, Centre-ville
Station, Montréal (Québec), H3C 3J7, Canada,}
\affil[2]{Institut Denis-Poisson CNRS/UMR 7013 - Université de Tours - Université d’Orléans, Parc de
Grandmont, 37200 Tours, France.}
\affil[3]{IVADO, 6666 Rue Saint-Urbain, Montréal (Québec), H2S 3H1, Canada}

\maketitle
\begin{abstract}
The adjacency matrix of a symplectic dual polar graph restricted to the eigenspaces of an abelian automorphism subgroup is shown to act as the adjacency matrix of a weighted subspace lattice. The connection between the latter and $U_q(sl_2)$ is used to find the irreducible components of the standard module of the Terwilliger algebra of symplectic dual polar graphs. The multiplicities of the isomorphic submodules are given. 

\bigskip

\noindent\textbf{Keywords:} \textit{Terwilliger algebra, Dual polar graphs, quantum groups.}

\noindent\textbf{AMS classification:} 05E30, 33D45, 20G42.
\end{abstract}
\section{Introduction}
$P$- polynomial association schemes arise in the description of the neighborhoods of vertices in distance-regular graphs. They play an important role in combinatorics, coding theory \cite{Bannai1984AlgebraicCI, brouwer2012distance} and have found applications in the study of quantum systems \cite{Hamming, bernard2021entanglement, chan2019fractional,crampHad}. In the case of schemes that are also $Q$- polynomial, Leonard's theorem \cite{leonard1982orthogonal} implies that they are related to the hypergeometric orthogonal polynomials of the Askey-scheme \cite{koekoek1996askey}. It is well known, for instance, that the Hamming and Johnson graphs are related to the Krawtchouk and dual Hahn polynomials \cite{Bannai1984AlgebraicCI} respectively. 

The Terwilliger algebra $\mathcal{T}$ was introduced in \cite{thini, thinii, thiniii} to study $P$- and $Q$-polynomial association schemes. The thin irreducible modules of this semi-simple algebra form Leonard systems \cite{TERWILLIGER2003463}, which give a convenient way of characterizing $\mathcal{T}$. In recent years, much efforts have been dedicated to the decomposition of the standard module of distance-regular graphs in irreducible submodules of their Terwilliger algebra. The Hamming \cite{Hamming,GO2002399,huang2021clebschgordan,LEVSTEIN20061} and Johnson \cite{bernard2021entanglement,GAO2014164,LEVSTEIN20071621,Obse,TAN2019157} cases have been worked out in great details. Distance-regular graphs associated to certain $q$-polynomials of the Askey-scheme have also received some attention. The Terwilliger algebra of the Grassmann graphs, which are related to dual $q$-Hahn polynomials, has been investigated in \cite{GAO2015427,LIANG2020117}. In this paper, we pursue these questions and consider the Terwilliger algebra of distance-regular graphs associated to dual $q$-Krawtchouk polynomials. In particular, we focus on the case of symplectic dual polar graphs \cite{brouwer2012distance}.

Dual polar graphs were introduced by Stanton in \cite{stanton1980some} as $q$-deformations of hypercubes. It was shown in \cite{WORAWANNOTAI2013443} that the Terwilliger algebra $\mathcal{T}$ of these graphs corresponds to some central extension of $U_q(sl_2)$. Furthermore, the irreducible $\mathcal{T}$-submodules were identified as Leonard systems of dual $q$-Krawtchouk type. 

While these results give a good description of $\mathcal{T}$, the characterization remains incomplete. Some parameters $(r,t,d)$ on which depends the exact nature of the dual $q$-Krawtchouk Leonard systems have not been determined in \cite{WORAWANNOTAI2013443}. Identifying these parameters for each irreducible submodule of the dual polar graphs of type $[C_D(q)]$ with $q$ prime constitutes our central result. We also obtain the multiplicity of the submodules for each set of parameters. 

Some central operators were introduced in \cite{WORAWANNOTAI2013443} to relate $\mathcal{T}$ and the quantum group $U_q(sl_2)$. Since their explicit construction depends on the identification of $(r,t,d)$ for each submodule, our results also shed light on the relation between these two algebraic structures. 

The paper is organized as follows. In section \ref{s2}, we recall the definitions of $P$- and $Q$-polynomial association schemes and of their Terwilliger algebra. In section \ref{s3}, we present the construction of the symplectic dual polar graphs and describe some of their properties. We look at the automorphism group of these graphs and its action on the standard module. We also highlight the existence of a special abelian subgroup $H$. In section \ref{s5}, we construct a basis of the standard module which diagonalizes the action of the group elements in $H$. In section \ref{sbonus}, we show that there is a one-to-one correspondence between the vectors of this basis and the characteristic vectors of a subspace lattice $L_N(q)$. In section \ref{s6}, we present the subspace lattice graph $L_N(q)$ and its relation with $U_{\sqrt{q}}(sl_2)$. In section \ref{s7}, we show that the restriction of the adjacency matrix of a dual polar graph of type $[C_D(q)]$ to an eigenspace of $H$ corresponds to the adjacency matrix of a weighted subspace lattice. We use this connection and the relation between $L_N(q)$ and $U_{\sqrt{q}}(sl_2)$ to obtain the irreducible $\mathcal{T}-$submodules. We give the parameters $(r,t,d)$ and the multiplicity for the isomorphic submodules.

\titleformat*{\section}{\fontsize{14}{14}\bfseries}
\section{The Terwilliger algebra of $P$- and $Q$- polynomial association schemes}

\titleformat*{\section}{\LARGE\bfseries}
\label{s2}

Let $X$ be a set and $\{R_i\}_{0 \leq i \leq D}$ be a partition of $X \times X$, the set of all possible edges connecting two elements in $X$. It is said that $(X,\{R_i\}_{0 \leq i \leq D} )$ forms a \textit{symmetric association scheme} whenever
\begin{itemize}
    \item $R_0 = \{(x,x) : x \in X\}$;
    \item $(x,y) \in R_i \iff (y,x) \in R_i$;
    \item for any $x,y$ such that $(x,y) \in R_k$, the number $p_{ij}^k$ of $z \in X$ such that $(x,z) \in R_i$ and $(y,z) \in R_j$ depends only on $i,j$ and $k$.
\end{itemize}
The standard module $V$ of a symmetric association scheme is the vector space over $\mathbb{C}$ spanned by the characteristic vectors labeled by the elements of $X$, i.e.
\begin{align}
    V = \text{span}_{\mathbb{C}}\{\ket{x} : x \in X\},
\end{align}
where $\text{span}_K$ refers to the span over the field $K$. The $i^{\text{th}}$ adjacency matrix $A_i$ of a scheme is the matrix acting on $V$ with entries 
\begin{align}
    (A_i)_{xy} =  
\left\{
    \begin{array}{ll}
    	1  & \mbox{if } (x,y) \in R_i, \\
    	0 & \mbox{otherwise, }
    \end{array}
\right.
\label{defai}
\end{align}
where $(A_i)_{xy} = \bra{x} A_i \ket{y}$. These matrices verify the following relations: 
\begin{multicols}{2}
\begin{itemize}
        \item $A_0 = \mathds{1}$;
        \item $\displaystyle \sum_{i =0}^D A_i  = \mathbb{J}$;
        \item $A_i \circ A_j = \delta_{ij} A_i$; 
        \item $A_i A_j =  \displaystyle \sum_{k=0}^D p_{ij}^k A_k$,
\end{itemize}
\end{multicols}
\noindent where $\circ$ is the entry-wise product, $\mathbb{J}$ is the $|X| \times |X|$ matrix of ones and the intersection numbers $p_{ij}^k$ are positive integers. The matrices $A_i$ commute and thus share a common set of eigenspaces. The number of different eigenspaces is $D+1$, which is also the number of adjacency matrices \cite{Bannai1984AlgebraicCI}. The projectors onto these spaces are denoted $E_i$, $0\leq i \leq D$, and verify:
\begin{multicols}{2}
 \begin{itemize}
        \item$E_0 = \displaystyle\frac{1}{|X|} \mathbb{J}$;
        \item $\displaystyle\sum_{i =0}^D E_i  = \mathds{1}$;
        \item  $E_i E_j = \delta_{ij} E_i$ ; 
        \item  $E_i \circ E_j = \displaystyle\frac{1}{|X|}\displaystyle\sum_{k=0}^D q_{ij}^k E_k$,
    \end{itemize}
\end{multicols}
\noindent where the Krein parameters $q_{ij}^k$ are non-negative real numbers. The commutative algebra generated by the set of adjacency matrices $\{A_i\}_{0\leq i \leq D}$ is referred to as the \textit{Bose-Mesner algebra} of the scheme. 

Given an arbitrary choice of a reference vertex $x_0 \in X$, one can also construct dual adjacency matrices $A_i^*$ and dual projectors $E_i^*$. They are diagonal matrices acting on $V$ with entries
\begin{align}
    (A_i^*)_{xx} = |X| (E_{i})_{x_0 x},
    \label{defdualA}
\end{align}
and
\begin{align}
    (E_i^*)_{xx} = (A_{i})_{x_0 x} = 
    \left\{
    \begin{array}{ll}
    	1  & \mbox{if } (x_0,x) \in R_i, \\
    	0 & \mbox{otherwise. }
    \end{array}
\right.,
    \label{defdualP}
\end{align}
One can check that the dual adjacency matrices verify
\begin{align}
    A^*_{i} A^*_{j} = \sum_{k=0}^D q_{i j}^{k} A^*_{k}
\end{align}
and that the $E_i^*$ really act as projectors, i.e.
\begin{align}
    \sum_{i = 0}^{D} E_i^*  = \mathds{1}, \quad \quad E_{i}^* E_{j}^* = \delta_{i j}E_{i}^*.
\end{align}

A symmetric association scheme is said to be $P$-\textit{polynomial} when for each $i \in \{0,1, \dots, D\}$ there exists a polynomial $\gamma_i$ of degree $i$ such that 
\begin{align}
    A_i = \gamma_i(A_1).
    \label{ppoly}
\end{align}
Similarly, it is said to be $Q$-\textit{polynomial} when for each $i \in \{0,1, \dots, D\}$ there exists a polynomial $\gamma^*_i$ of degree $i$ such that 
\begin{align}
    A_i^* = \gamma^*_i(A_1^*).
    \label{qpoly}
\end{align}
Given a $P$- and $Q$- polynomial association scheme, one defines its \textit{Terwilliger algebra} $\mathcal{T}$ as the algebra spanned by its adjacency and dual adjacency matrices. Using \eqref{ppoly} and \eqref{qpoly}, one sees that it is generated by two elements, i.e.
\begin{align}
    \mathcal{T} = \langle A, A^* \rangle,
    \label{defter}
\end{align}
where we used the simplified notation $A = A_1$ and $A^* = A_1^*$.

\subsection{Automorphism group of an association scheme }

The automorphism group $G$ of an association scheme $(X, \{R_i\}_{0\leq i \leq D})$ has for elements the maps ${g} : X \rightarrow X$ verifying
\begin{align}
    (x,y) \in R_i \iff ({g}x,g y) \in R_i, \quad \forall x,y \in X
    \label{defauto}
\end{align}
and for group product the composition of maps. The representation $\rho$ of $G$ on the standard module is given by
\begin{align}
    \rho({g}) \ket{{x}} = \ket{gx},
\end{align}
such that $V$ is both a $\mathcal{T}$- and a $G$-module. Using \eqref{defai} and \eqref{defauto}, one finds that $\rho(g)$ commutes with the matrices in the Bose-Mesner algebra, i.e.
\begin{align}
    [A_i,\rho(g)] = 0, \quad \forall g \in G,\  \forall i \in \{0,1,\dots, D\}.
\end{align}
Restricting to a subgroup $P \subset G$ stabilizing $x_0$, one also gets that
\begin{align}
    [A_i^*,\rho(g)] = 0, \quad \forall g \in P,\  \forall i \in \{0,1,\dots, D\}.
\end{align}
It follows that any eigenspace W of $\rho(g)$ is stabilized by the Terwilliger algebra $\mathcal{T}$.

\section{Dual polar graphs of type $[C_D(q)]$}
\label{s3}
We recall here the definition of the symplectic dual polar graphs and their connection with association schemes and the group of Lie type $Sp(2D,q)$. While these graphs are defined for $q$ being any power of a prime number, we shall restrict ourselves to $q$ prime. 

\subsection{The definition of the graphs}
Dual polar graphs of type $[C_D(q)]$ (or symplectic) are constructed in the following way \cite{brouwer2012distance}. Let $\mathbb{F}_q^{2D}$ be a vector space of dimension $2D$ over the finite field $\mathbb{F}_q$ and equipped with a non-degenerate symplectic form $\mathfrak{B}$. There exists a symplectic basis $\{e_1, \dots, e_D\}\cup\{f_1, \dots, f_D\}$ such that
\begin{align}
    \mathfrak{B}(e_i, e_j) = \mathfrak{B}(f_i, f_j) = 0 \quad \text{and} \quad  \mathfrak{B}(e_i, f_j) =-\mathfrak{B}(f_j, e_i) = \delta_{ij}.
\end{align}
\noindent The set of vertices $X$ of a symplectic dual polar graph is the set of all the \textit{maximal isotropic subspaces} of $\mathbb{F}_q^{2D}$. In other words, the subspace $x \subset \mathbb{F}_q^{2D}$ is a vertex if
\begin{align}
    \mathfrak{B}(v_1,v_2) = 0, \quad \quad \forall v_1, v_2 \in x
\end{align}
and if $x$ is not contained in a larger isotropic subspace. By Witt's theorem, the dimension of a maximal isotropic subspace of $\mathbb{F}_q^{2D}$ is always equal to $D$ \cite{stanton1980some} and, one gets, for any $x\in X$, that
\begin{align}
    \text{dim}(x) = D.
\end{align}
For instance, the subspace of dimension $D$
\begin{align}
    x_i = \text{span}_{\mathbb{F}_q}\{e_1, \dots, e_{D-i}\}\  \oplus\  \text{span}_{\mathbb{F}_q}\{f_{D-i+1}, \dots, f_{D}\}
\end{align}
is a vertex for all $i \in \{0, 1, \dots, D\}$. In the graph, two maximal isotropic subspaces $x$ and $y$ are connected by an edge when 
\begin{align}
    \text{dim}(x \cap y) = D-1.
\end{align}
The distance between two vertices is therefore
\begin{align}
    \text{dist}(x,y) = D- \text{dim}(x \cap y).
    \label{defdista}
\end{align}
For example, 
\begin{align}
    \text{dist}(x_i,x_j) = |i-j|.
\end{align}
An important property of these graphs is that they are \textit{distance-regular}, i.e. the numbers
\begin{align}
    p_{ij}^k = |\{ z \in X : \text{dist}(x,z) = i, \text{dist}(y,z) = j  \}|
    \label{didi}
\end{align}
are the same for all $x, y \in X$ such that $\text{dist}(x,y) = k$. This implies that $(X, \{R_i\}_{0 \leq i \leq D})$ with
\begin{align}
    (x,y) \in R_i \iff \text{dist}(x,y) = i
\end{align}
forms a $P$-polynomial association scheme (chapter III in \cite{Bannai1984AlgebraicCI},  Proposition $1.1$) with intersection parameters $p_{ij}^k$ given by \eqref{didi}. In particular, its parameters $a_i \equiv p_{1i}^i$, $b_i \equiv p_{1i+1}^i$ and $c_i \equiv p_{1i-1}^i$ are given by \cite{brouwer2012distance}
\begin{align}
a_i = q^i - 1, \quad b_i = \frac{q^{i+1}(q^{D-i} - 1)}{q-1}, \quad c_i = \frac{q^{i} - 1}{q-1}.
\end{align}
This scheme is also known to be $Q$-polynomial \cite{Bannai1984AlgebraicCI} and thus equipped with a Terwilliger algebra $\mathcal{T}$, as defined in \eqref{defter}.
\subsection{Automorphism subgroups}
The full automorphism group of a dual polar graph of type $[C_D(q)]$ is the projective semilinear symplectic group (\cite{brouwer2012distance}, Theorem 9.4.3). A subgroup is obtained by restricting to maps associated to matrices in $Sp(2D,q)$, the group of Lie type composed of the non-singular $2D \times 2D$ matrices with entries in $\mathbb{F}_q$ that preserve $\mathfrak{B}$. Let $g \in Sp(2D,q)$ and $x=\text{span}_{\mathbb{F}_q}\{v_1, \dots, v_D\}\in X$. The action of $g$ on $x$ is defined by \begin{align}
    {g} \ \text{span}_{\mathbb{F}_q}\{v_1, \dots v_D\}=  \text{span}_{\mathbb{F}_q}\{g v_1, \dots g v_D\}
    \label{sp}
\end{align}
and provides an automorphism of the scheme. In most cases, the transformations in this subgroup are sufficient. For instance, they are enough to show that symplectic dual polar graphs are two-point homogeneous spaces and thus distance-regular:
\begin{theorem}
{\normalfont(\cite{stanton1980some}, Theorem 5.1)} The function $\text{dist}(y,z)$ is a metric on $X$ and if $\text{dist}(y_1,z_1) = \text{dist}(y_2, z_2)$, there exists $g \in Sp(2D,q)$ such that $({g}y_1, g z_1) = (y_2,z_2)$.
\label{th1}
\end{theorem}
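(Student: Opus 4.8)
The plan is to verify the metric axioms first, and then to establish two-point homogeneity by reducing an arbitrary pair of vertices at a given distance to a single normal form controlled by the symplectic group.

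For the metric, symmetry and $\text{dist}(x,x)=0$ are immediate from \eqref{defdista}, while $\text{dist}(x,y)=0$ forces $\dim(x\cap y)=D=\dim x$, hence $x\cap y=x=y$. The only substantive axiom is the triangle inequality, which I would derive from elementary dimension counting rather than from any symplectic input. Rewriting $\text{dist}(x,z)\le\text{dist}(x,y)+\text{dist}(y,z)$ through \eqref{defdista}, it is equivalent to $\dim(x\cap y)+\dim(y\cap z)\le D+\dim(x\cap z)$. Since $x\cap y$ and $y\cap z$ both lie in $y$, so does their sum, and the modular identity gives $\dim(x\cap y)+\dim(y\cap z)=\dim\big((x\cap y)+(y\cap z)\big)+\dim(x\cap y\cap z)\le D+\dim(x\cap y\cap z)$; as $x\cap y\cap z\subseteq x\cap z$, the claim follows. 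Note this uses only that all vertices are $D$-dimensional.

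For two-point homogeneity, since $Sp(2D,q)$ acts on $X$ by automorphisms it suffices to show that every pair $(y,z)$ with $\text{dist}(y,z)=i$ can be mapped to the reference pair $(x_0,x_i)$; composing the map obtained for $(y_1,z_1)$ with the inverse of the one for $(y_2,z_2)$ then produces the desired $g$. I would construct an adapted symplectic basis. Set $W=y\cap z$, a totally isotropic subspace of dimension $D-i$, so $W\subseteq W^\perp$ with $\dim W^\perp=D+i$, and $\mathfrak{B}$ descends to a non-degenerate symplectic form on the $2i$-dimensional quotient $W^\perp/W$. Because $y,z\subseteq W^\perp$ and $y\cap z=W$, the images $\bar y,\bar z$ are complementary maximal isotropic subspaces of $W^\perp/W$, hence admit a hyperbolic basis; I lift it to vectors $\epsilon_{D-i+1},\dots,\epsilon_D\in y$ and $\phi_{D-i+1},\dots,\phi_D\in z$ with $\mathfrak{B}(\epsilon_a,\phi_b)=\delta_{ab}$ (the pairing is insensitive to the lift, since changing $\epsilon_a$ by $w\in W$ alters $\mathfrak{B}(\epsilon_a,\phi_b)$ by $\mathfrak{B}(w,\phi_b)=0$ as $\phi_b\in z\subseteq W^\perp$). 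Picking a basis $\epsilon_1,\dots,\epsilon_{D-i}$ of $W$ and completing this isotropic family to a full symplectic basis by adjoining dual vectors $\phi_1,\dots,\phi_{D-i}$ orthogonal to the hyperbolic block above, one obtains a symplectic basis with $y=\text{span}\{\epsilon_1,\dots,\epsilon_D\}$ and $z=\text{span}\{\epsilon_1,\dots,\epsilon_{D-i},\phi_{D-i+1},\dots,\phi_D\}$, which is exactly the form of $(x_0,x_i)$. The linear bijection sending this basis to the basis $\{e_j,f_j\}$ underlying $(x_0,x_i)$ maps a symplectic basis to a symplectic basis, hence lies in $Sp(2D,q)$ and carries $(y,z)$ to $(x_0,x_i)$.

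The main obstacle is the final completion step: extending the totally isotropic family spanning $W$ to a symplectic basis compatible with the hyperbolic block $\{\epsilon_a,\phi_a\}_{a>D-i}$. Here one must check that $\mathfrak{B}$ pairs $W$ non-degenerately with a transverse subspace and that the duals can be chosen isotropic and orthogonal to the hyperbolic block; equivalently, one invokes Witt's extension theorem, with the mild subtlety that the subspace $y+z=W^\perp$ carries a degenerate restricted form whose radical is precisely $W$, so the partial isometry between the two configurations must be verified on this radical before extending. Since both the existence of hyperbolic bases and Witt's extension theorem are classical facts for symplectic spaces over an arbitrary field, I would cite them rather than reprove them, leaving only routine bookkeeping of the explicit basis.
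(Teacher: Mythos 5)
Your proposal is correct, but note that the paper itself offers no proof of this statement: it is imported verbatim from Stanton (Theorem 5.1 of the cited reference), so there is no internal argument to compare yours against, and a self-contained proof like yours is genuinely additional content. Your argument is sound. The triangle inequality follows exactly as you say from the modular identity applied inside $y$, using only that all vertices have dimension $D$. For homogeneity, the reduction to the normal form $(x_0,x_i)$ is the right move, and the key steps check out: $y,z\subseteq W^\perp$ because a maximal isotropic subspace is its own perpendicular (so $y=y^\perp\subseteq W^\perp$); the images $\bar{y},\bar{z}$ in $W^\perp/W$ are complementary Lagrangians (if $v\in y$, $w\in z$ and $v-w\in W$, then $v\in y\cap z=W$); the hyperbolic pairing is insensitive to the choice of lifts for the reason you give; and the cross-pairings needed for the full basis to be symplectic all vanish because the relevant vectors lie in the isotropic spaces $y$ or $z$ or in mutually orthogonal blocks. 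For the completion step that you flag as the main obstacle, your Witt-extension route works (with the radical of the restricted form on $y+z=W^\perp$ being exactly $W$, as you note), but there is a slightly more economical alternative: let $U$ be the span of the hyperbolic block $\{\epsilon_a,\phi_a\}_{a>D-i}$; then $U$ is non-degenerate, $W\subseteq U^\perp$ is a Lagrangian of the non-degenerate $2(D-i)$-dimensional space $U^\perp$, and a dual Lagrangian basis $\phi_1,\dots,\phi_{D-i}$ can be chosen inside $U^\perp$ by the standard complementary-Lagrangian construction, avoiding any appeal to the degenerate-subspace version of Witt's theorem. Either way the basis-change map sends a symplectic basis to a symplectic basis, hence lies in $Sp(2D,q)$ and carries $(y,z)$ to $(x_0,x_i)$, and composing two such maps yields the claimed $g$.
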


For any matrix $g \in Sp(2D, q)$, one can define the $D \times D$ matrices $\mathcal{C}$, $\mathcal{G}$, $\mathcal{D}$ and $\mathcal{E}$ as the following blocks in $g$:
\begin{align}
g = 
    \begin{pmatrix}
\mathcal{C} & \mathcal{G}\\
\mathcal{D} & \mathcal{E}
\end{pmatrix},
\end{align}
\noindent where $\mathcal{C}$ and $\mathcal{E}$ act on the subspaces spanned by $\{e_1, e_2, \dots, e_{D-1}, e_D\}$  and $\{f_1, f_2 \dots, f_{D-1},f_D\}$ respectively. 
To preserve the form $\mathfrak{B}$ implies
\begin{align}
g^t
\begin{pmatrix}
0 & \mathds{1} \\
-\mathds{1} & 0
\end{pmatrix}
g = 
\begin{pmatrix}
0 & \mathds{1} \\
-\mathds{1} & 0
\end{pmatrix}.
\end{align}
\noindent In terms of $\mathcal{C}$, $\mathcal{G}$, $\mathcal{D}$ and $\mathcal{E}$, this amounts to
\begin{align}
    -\mathcal{C}^t \mathcal{D} + \mathcal{D}^t \mathcal{C} = 0, \quad -\mathcal{G}^t \mathcal{E} + \mathcal{E}^t \mathcal{G} = 0 \quad \text{and} \quad -\mathcal{D}^t\mathcal{G} + \mathcal{C}^t \mathcal{E} = \mathds{1}.
    \label{spcondi}
\end{align}
 Let us now consider the parabolic subgroup $P \subset Sp(2D,q)$ which corresponds to the stabilizer of $x_0 = \text{span}_{\mathbb{F}_q}\{e_1, \dots, e_D\}$. For $g \in P$, one finds that the block $\mathcal{D}$ must be zero. Then, \eqref{spcondi} leads to the following decomposition
\begin{align}
g = 
    \begin{pmatrix}
\mathcal{C} & 0\\
0 & (\mathcal{C}^t)^{-1}
\end{pmatrix}
    \begin{pmatrix}
\mathds{1}& \mathcal{F}\\
0 & \mathds{1}
\end{pmatrix},
\label{decomp}
\end{align}
\noindent where $\mathcal{C} \in GL(D, q)$ is an invertible matrix and $\mathcal{F} = \mathcal{C}^{-1}\mathcal{G} \in \text{Sym}_D $ is a symmetric matrix. In the following, we shall refer to an element in $P$ by the pair of matrices $(\mathcal{C},\mathcal{F})$ appearing in its decomposition \eqref{decomp}, i.e.
\begin{align}
    P = \left(\{(\mathcal{C},\mathcal{F}) : \mathcal{C} \in GL(D, q), \mathcal{F} \in \text{Sym}_D \}, \ \cdot \ \right),
\end{align}
where the group product $\cdot$ is given by the semi-direct product of $GL(D,q)$ with $\text{Sym}_D$:
\begin{align}
    (\mathcal{C}_1,\mathcal{F}_1) \cdot (\mathcal{C}_2,\mathcal{F}_2) = (\mathcal{C}_1\mathcal{C}_2,\  \mathcal{C}_2^{-1}\mathcal{F}_1(\mathcal{C}_2^t)^{-1} + \mathcal{F}_2).
\end{align}
Now, consider the following subgroup of $P$:
\begin{align}
    H = \left(\{(\mathds{1}, \mathcal{F}) : \mathcal{F} \in \text{Sym}_D\}, \ \cdot \ \right).
    \label{definitionH}
\end{align}
It is isomorphic to $(\mathbb{F}_q, + )^{ \frac{D(D+1)}{2}}$ and thus abelian. Since we restrict $q$ to being a prime, the subgroup $H$ therefore corresponds to $D(D+1)/2$ copies of the cyclic group of dimension $q$. 
\section{Eigenspaces of $H$}
\label{s5}
Since the subgroup $H$ is abelian, there exists a basis of the standard module $V$ which diagonalizes simultaneously the action of all the group elements $(\mathds{1}, \mathcal{F})$. The aim of this section is to construct this basis. Consider the following decomposition of $V$ in terms of neighborhoods of $x_0$:
\begin{align}
    V = \bigoplus_{i = 0}^D V_i,
    \label{decrom}
\end{align}
where 
\begin{align}
    V_i = \text{span}_{\mathbb{C}}\{ \ket{x} : x \in X \ \text{s.t.} \  \text{dist}(x,x_0) = i\}.
    \label{vi}
\end{align}
Taking $y_1 = y_2 = x_0$ in theorem \ref{th1}, one finds that the $i^{\text{th}}$ neighborhood of $x_0$ corresponds to the orbit of $P$ acting on any reference vertex $ref_i \in X$, $\text{dist}(x_0,ref_i) =i$. Without loss of generality, let us pick 
\begin{align}
    ref_i = x_i = \text{span}_{\mathbb{F}_q}\{e_1, \dots, e_{D-i}\}\  \oplus\  \text{span}_{\mathbb{F}_q}\{f_{D-i+1} \dots, f_{D}\},
\end{align}
and rewrite \eqref{vi} as
\begin{align}
    V_i = \rho(P) \ket{x_i} \equiv \text{span}_{\mathbb{C}}\{ \rho((\mathcal{C},\mathcal{F})) \ket{x_i} : (\mathcal{C},\mathcal{F}) \in P \}.
    \label{Vorb}
\end{align}
Since we have that
\begin{align}
    \rho((\mathds{1},\mathcal{F})) V_i \subseteq V_i, \quad \forall\ \mathcal{F} \in \text{Sym}_{D},
\end{align}
we can look for the eigenvectors of the matrices $\rho((\mathds{1},\mathcal{F}))$ in each of the submodules $V_i = \rho(P)\ket{x_i}$ separately. First, we consider those in the orbit $\rho(H) \ket{x_i}$ of $\ket{x_i}$ under $H$. We note that:

\begin{lemma}
Two group elements $(\mathds{1},\mathcal{F})$ and $(\mathds{1},\mathcal{F}')$ of $H$ have the same action on $\ket{x_i}$ if and only if $ \mathcal{F}_{mn} = \mathcal{F}_{mn}'$ $\forall \  m,n > D-i$.
\label{lementries}
\end{lemma}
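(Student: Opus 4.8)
The plan is to make the group action on maximal isotropic subspaces fully explicit and then to read off which entries of $\mathcal{F}$ survive once we pass to the subspace $x_i$. Note first that $\rho((\mathds{1},\mathcal{F}))\ket{x_i}=\ket{(\mathds{1},\mathcal{F})\,x_i}$ and that, since the $\ket{x}$ form a basis of $V$, two elements of $H$ act identically on $\ket{x_i}$ exactly when the vertices $(\mathds{1},\mathcal{F})\,x_i$ and $(\mathds{1},\mathcal{F}')\,x_i$ coincide as subspaces. I would therefore work entirely at the level of subspaces. Using the decomposition \eqref{decomp} with $\mathcal{C}=\mathds{1}$, the $2D\times 2D$ matrix of $g=(\mathds{1},\mathcal{F})$ is block upper-triangular with identity diagonal blocks and $\mathcal{F}$ in the top-right corner; reading off its action on the symplectic basis gives $g\,e_j=e_j$ for all $j$ and $g\,f_k=f_k+\sum_{m=1}^{D}\mathcal{F}_{mk}\,e_m$.

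Next I would compute the image subspace $g\,x_i$. Since $x_i=\operatorname{span}\{e_1,\dots,e_{D-i}\}\oplus\operatorname{span}\{f_{D-i+1},\dots,f_{D}\}$ and the $e_j$ are fixed, $g\,x_i$ is spanned by $e_1,\dots,e_{D-i}$ together with the vectors $f_k+\sum_{m}\mathcal{F}_{mk}\,e_m$ for $k>D-i$. Because $e_1,\dots,e_{D-i}$ already lie in $g\,x_i$, I can subtract their contributions and reduce each such generator to $f_k+\sum_{m>D-i}\mathcal{F}_{mk}\,e_m$ without changing the subspace. This exhibits $g\,x_i$ as depending only on the block $(\mathcal{F}_{mk})_{m,k>D-i}$, which gives the ``if'' direction at once: if $\mathcal{F}$ and $\mathcal{F}'$ agree on all entries with both indices exceeding $D-i$, then $g\,x_i=g'\,x_i$ and hence $\rho((\mathds{1},\mathcal{F}))\ket{x_i}=\rho((\mathds{1},\mathcal{F}'))\ket{x_i}$.

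For the converse I would show that this block is in fact recoverable from the subspace $g\,x_i$ alone. For each fixed $k_0>D-i$ there is a unique vector of $g\,x_i$ whose $f$-component equals $f_{k_0}$ and whose $e$-coordinates indexed by $j\le D-i$ all vanish; by the reduced generators above this vector is exactly $f_{k_0}+\sum_{m>D-i}\mathcal{F}_{mk_0}\,e_m$. Hence the scalars $\mathcal{F}_{mk_0}$ for $m>D-i$ are determined by $g\,x_i$, and letting $k_0$ range over $\{D-i+1,\dots,D\}$ recovers the entire block. Therefore $g\,x_i=g'\,x_i$ forces $\mathcal{F}_{mn}=\mathcal{F}'_{mn}$ for all $m,n>D-i$.

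I expect the only delicate point to be the uniqueness claim underlying the converse: one must verify that imposing ``$f$-part equal to $f_{k_0}$ and lower $e$-coordinates zero'' singles out a single vector of $g\,x_i$, i.e. that reduction modulo $\operatorname{span}\{e_1,\dots,e_{D-i}\}$ erases precisely the entries carrying an index $\le D-i$ and nothing more. The symmetry $\mathcal{F}_{mn}=\mathcal{F}_{nm}$ then guarantees that the surviving index set $\{(m,n):m,n>D-i\}$ is exactly the one named in the statement, while the entries lost in the reduction are those with a single index $\le D-i$, consistent with their absence from the conclusion.
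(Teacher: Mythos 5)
Your proof is correct and follows essentially the same route as the paper's: compute $(\mathds{1},\mathcal{F})x_i$ explicitly, observe that entries $\mathcal{F}_{mn}$ with $m$ or $n\le D-i$ only shift the generators by elements of $x_i\cap x_0$ and hence leave the subspace unchanged, and conclude that the subspace is governed by the bottom-right $i\times i$ block. Your converse---recovering each entry $\mathcal{F}_{mk_0}$, $m>D-i$, from the unique vector of the image subspace with $f$-part $f_{k_0}$ and vanishing $e$-coordinates below $D-i$---simply makes explicit what the paper asserts in a single line from its equation \eqref{masterac}.
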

\begin{proof}
 By definition, we have
 \begin{align}
     \rho((\mathds{1},\mathcal{F})) \ket{x_i} = \ket{(\mathds{1},\mathcal{F}) x_i}.
 \end{align}
 From the action of $H$ on the $\mathbb{F}_q^{2D}$, we get that
 \begin{equation}
     \begin{split}
          (\mathds{1}, \mathcal{F}) x_i  &= \text{span}_{\mathbb{F}_q}\{e_1, \dots, e_{D-i}\}\  \oplus\  \text{span}_{\mathbb{F}_q}\{(\mathds{1}, \mathcal{F}) f_{D-i+1}, \dots, (\mathds{1}, \mathcal{F}) f_{D}\}, \\
          & = x_i \cap x_0  \oplus\  \text{span}_{\mathbb{F}_q}\{ f_{D-i+1} + \sum_{j = 1}^D \mathcal{F}_{j, D-i+1} e_j,  \dots,  f_{D} + \sum_{j = 1}^D \mathcal{F}_{j, D} e_j\}.
          \label{masterac}
     \end{split}
 \end{equation}
 Changing the entries $\mathcal{F}_{mn}$ with $m \text{ or } n \leq D-i$ amounts to translate the basis vectors $f_{k} + \sum_{j = 1}^D \mathcal{F}_{j, k} e_j$ by a linear combination of vectors in $x_i \cap x_0$. Since the latter is always in the subspace $(\mathds{1},\mathcal{F})x_i$, this modification does not affect the resulting subspace.
 
 If we have $\mathcal{F}_{mn} \neq \mathcal{F}'_{mn}$ with $m,n > D -i$, we  find from equation \eqref{masterac} that $(\mathds{1}, \mathcal{F})x_i \neq (\mathds{1}, \mathcal{F}')x_i$. The lemma follows. 
\end{proof}

\begin{remark}
 Equation \eqref{masterac} also shows that every subspace in the orbit of $x_i$ under $H$ shares the same intersection with $x_0$.
\end{remark}

\noindent From lemma \ref{lementries}, we see that the basis vectors of $\rho(H)\ket{x_i}$ are all obtained only once by acting on $\ket{x_i}$ with the group elements $(\mathds{1}, \mathcal{F})$, with $\mathcal{F}$ having only non-zero entries in its bottom right $i\times i$ block, i.e
\begin{align}
    \mathcal{F}_{mn} = 0 \quad \text{  for  }\quad m \text{ or } n \leq D-i.
    \label{condi49}
\end{align}
We refer to the set of $D \times D$ symmetric matrices verifying \eqref{condi49} as $\text{Sym}_{D,i}$. Now, for a matrix $S \in \text{Sym}_{D,i}$, consider the following vector:
\begin{align}
    \ket{[\mathds{1}], {S}}_i = \sum_{\mathcal{F} \in \text{Sym}_{D,i}} \chi_S(\mathcal{F}) \ket{(\mathds{1},\mathcal{F}) x_i},
    \label{eigenH}
\end{align}
\noindent with 
\begin{equation}
    \begin{split}
        \chi_S(\mathcal{F}) &= q^{-\frac{i(i+1)}{4}}\prod_{j,k = 0}^D e^{\frac{ 2\pi i}{q} S_{jk}\mathcal{F}_{jk}} = q^{-\frac{i(i+1)}{4}} e^{\frac{ 2\pi i}{q} \text{tr}(S \mathcal{F})},
        \label{eigenH2}
    \end{split}
\end{equation}
where $\text{tr}(S \mathcal{F})$ refers to the trace of $S \mathcal{F}$. Given two vectors $\ket{[\mathds{1}], {S}}_i$ and $\ket{[\mathds{1}], {S'}}_i$, the condition $S,S' \in \text{Sym}_{D,i}$ implies that
\begin{align}
    \text{tr}((S' - S)\mathcal{F}) = 0, \quad \forall \mathcal{F} \in \text{Sym}_{D,i} \quad \iff \quad S = S'.
\end{align}
Thus, a direct computation shows that
\begin{align}
    \mathstrut_i\bra{[\mathds{1}],{S}}\ket{[\mathds{1}], {S}'}_i = \delta_{S,S'}.
\end{align}
\noindent One also finds
\begin{align}
    \rho((\mathds{1}, \mathcal{F})) \ket{[\mathds{1}],{S}}_i = \lambda_S(\mathcal{F}) \ket{[\mathds{1}],{S}}_i,
\end{align}
\noindent with
\begin{align}
    \lambda_{{S}}(\mathcal{F}) = e^{\frac{-2 \pi i}{q} \text{tr}(S \mathcal{F})}.
\end{align}
Therefore, $\{\ket{[\mathds{1}],S}_i : S \in \text{Sym}_{D,i} \}$ forms an orthonormal basis for the orbit  $\rho(H)\ket{x_i}$ which diagonalizes the action of the group elements of $H \cong \{(\mathds{1}, \mathcal{F})\}$.  It is worth noting that \eqref{eigenH} and \eqref{eigenH2} are motivated by the representation theory of cyclic groups and necessitate the restriction of $q$ to being a prime. When $q$ is a prime power, $H$ is no longer multiple copies of a cyclic group and other expressions are required.

To obtain a basis for $V_i$ defined by eigenvectors of the matrices $\rho((\mathds{1},\mathcal{F}))$, we also need to consider the action of the group elements $(\mathcal{C}, 0)$ on $\rho(H)\ket{x_i}$. Taking $S \in \text{Sym}_{D}$ such that $\mathcal{C}^t S \mathcal{C} \in \text{Sym}_{D,i}$ and using
\begin{align}
    (\mathds{1}, \mathcal{F})\cdot (\mathcal{C},0) =  (\mathcal{C},0)\cdot(\mathds{1}, \mathcal{C}^{-1}\mathcal{F}(\mathcal{C}^{-1})^t),
    \label{switch}
\end{align}
one finds that
\begin{align}
    \ket{[\mathcal{C}],S}_i \equiv \rho((\mathcal{C},0)) \ket{[\mathds{1}],\mathcal{C}^t S \mathcal{C}}_i
\end{align} is also an eigenvector of the matrices in $H$. Indeed, we have
\begin{equation}
    \begin{split}
         \rho((\mathds{1}, \mathcal{F})) \ket{[\mathcal{C}], S}_i &=  \lambda_{\mathcal{C}^tS\mathcal{C}}(\mathcal{C}^{-1}\mathcal{F}(\mathcal{C}^{-1})^t) \ket{[\mathcal{C}], S}_i \\ & =  \lambda_{S}(\mathcal{F}) \ket{\mathcal{C}, S}_i.
    \end{split}
\end{equation}
Here, $[\mathcal{C}]$ is the equivalence class of $\mathcal{C}$ with respect to the relation $\sim_{S,i}$ defined as
\begin{align}
    \mathcal{C} \sim_{S,i} \mathcal{C}' \iff \rho((\mathcal{C},0)) \ket{[\mathds{1}],\mathcal{C}^t S \mathcal{C}}_i = \rho((\mathcal{C}',0)) \ket{[\mathds{1}],\mathcal{C}'^t S \mathcal{C}'}_i.
\end{align}
 This takes into account that different matrices $\mathcal{C}$ can lead to the same vector. The following lemma gives the necessary and sufficient condition for two matrices to be equivalent in this respect.
\begin{lemma}
Two matrices $\mathcal{C}$ and $\mathcal{C}' \in GL(D,q)$ verify $\mathcal{C} \sim_{S,i} \mathcal{C}'$ if and only if
\begin{align}
   \rho((\mathcal{C}^{-1} \mathcal{C}', 0 ))\ket{x_i} = \ket{x_i}.
    \label{eqlemt}
\end{align}
\label{lemTest}
\end{lemma}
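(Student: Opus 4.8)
The plan is to reduce the statement to the special case $\mathcal{C}=\mathds{1}$ and then to identify the content of the relation $\sim_{S,i}$ with the stabilizer of $x_i$ inside the subgroup $\{(\mathcal{C},0)\}$. First I would apply $\rho((\mathcal{C},0))^{-1}=\rho((\mathcal{C}^{-1},0))$ to both sides of the equality defining $\mathcal{C}\sim_{S,i}\mathcal{C}'$. Using the semidirect-product law together with $(\mathcal{C}^{-1},0)\cdot(\mathcal{C}',0)=(\mathcal{C}^{-1}\mathcal{C}',0)$ and the identity $(\mathcal{C}')^{t}S\mathcal{C}'=(\mathcal{C}^{-1}\mathcal{C}')^{t}(\mathcal{C}^{t}S\mathcal{C})(\mathcal{C}^{-1}\mathcal{C}')$, the relation becomes $\ket{[\mathds{1}],S_0}_i=\rho((c,0))\ket{[\mathds{1}],c^{t}S_0c}_i$ with $c=\mathcal{C}^{-1}\mathcal{C}'$ and $S_0=\mathcal{C}^{t}S\mathcal{C}\in\text{Sym}_{D,i}$, while the right-hand side of the lemma turns into $\rho((c,0))\ket{x_i}=\ket{x_i}$. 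It therefore suffices to prove, for $c\in GL(D,q)$ with $c^{t}S_0c\in\text{Sym}_{D,i}$, that $\mathds{1}\sim_{S_0,i}c$ holds if and only if $(c,0)$ fixes $x_i$.

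Next I would describe the stabilizer explicitly. From the decomposition \eqref{decomp}, $(c,0)$ acts as $e_j\mapsto ce_j$ and $f_j\mapsto(c^{t})^{-1}f_j$, so it preserves the splitting of $\mathbb{F}_q^{2D}$ into the $e$- and $f$-spaces. Hence $(c,0)x_i=x_i$ is equivalent to $\text{span}_{\mathbb{F}_q}\{ce_1,\dots,ce_{D-i}\}=\text{span}_{\mathbb{F}_q}\{e_1,\dots,e_{D-i}\}$ together with the analogous condition on the $f$-part; and since these two conditions are each equivalent to the vanishing of the lower-left $i\times(D-i)$ block of $c$, the stabilizer consists exactly of the block upper-triangular matrices $c=\begin{pmatrix}A&B\\0&E\end{pmatrix}$ with $E$ invertible of size $i\times i$.

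For the ``if'' direction I would expand $\rho((c,0))\ket{[\mathds{1}],c^{t}S_0c}_i$ from \eqref{eigenH}. Assuming $(c,0)x_i=x_i$, the switch relation \eqref{switch} turns each $(c,\mathcal{F}')x_i$ into $(\mathds{1},c\mathcal{F}'c^{t})x_i$, and cyclicity of the trace in \eqref{eigenH2} rewrites the coefficient $\chi_{c^{t}S_0c}(\mathcal{F}')$ as $\chi_{S_0}(c\mathcal{F}'c^{t})$. The key observation is that a block computation gives the bottom-right $i\times i$ block of $c\mathcal{F}'c^{t}$ equal to $EF'E^{t}$, where $F'$ is the bottom-right block of $\mathcal{F}'$; by Lemma \ref{lementries} the vector $\ket{(\mathds{1},c\mathcal{F}'c^{t})x_i}$ depends only on this block, and because $S_0$ is supported on the bottom-right block the coefficient does too. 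Since $E$ is invertible, $F'\mapsto EF'E^{t}$ is a bijection of the $i\times i$ symmetric matrices, so reindexing the sum recovers exactly $\ket{[\mathds{1}],S_0}_i$, giving $\mathds{1}\sim_{S_0,i}c$.

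For the ``only if'' direction I would argue by comparing supports. Both $\ket{[\mathds{1}],S_0}_i$ and $\rho((c,0))\ket{[\mathds{1}],c^{t}S_0c}_i$ are nonzero combinations of characteristic vectors, and by the Remark following Lemma \ref{lementries} all subspaces occurring in the first share the intersection $\text{span}_{\mathbb{F}_q}\{e_1,\dots,e_{D-i}\}$ with $x_0$, while all subspaces occurring in the second share the intersection $\text{span}_{\mathbb{F}_q}\{ce_1,\dots,ce_{D-i}\}$ (using that $(c,0)$ fixes $x_0$). As $\ket{x_i}$ appears in the first with the nonzero coefficient $\chi_{S_0}(0)$, the equality forces $x_i$ to appear in the second, whence the two intersections coincide; by the characterization of the stabilizer this is precisely $(c,0)x_i=x_i$. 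The main obstacle is the ``if'' direction: the naive reindexing fails because $c\mathcal{F}'c^{t}$ generally lies outside $\text{Sym}_{D,i}$, and the resolution is exactly the combination of Lemma \ref{lementries} with trace cyclicity, which collapses all the relevant dependence onto the single block $EF'E^{t}$.
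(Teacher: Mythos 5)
Your proof is correct and follows essentially the same route as the paper, whose entire argument is the one-line instruction to apply the switch relation \eqref{switch} inside the definition \eqref{eigenH}; your reduction to $\mathcal{C}=\mathds{1}$, the trace-cyclicity rewriting, the block computation giving $EF'E^{t}$, and the reindexing via Lemma \ref{lementries} are exactly the details that one-liner leaves implicit. The converse via supports, using the Remark after Lemma \ref{lementries} together with the fact that $(c,0)$ stabilizes $x_0$, is also sound, so there are no gaps.
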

\begin{proof}
 It follows from a direct use of \eqref{switch} in definition \eqref{eigenH}.
\end{proof}
 \noindent In other words, the equivalence classes $[\mathcal{C}]$ are the left cosets of the stabilizer subgroup $GL(D,q)_{x_i}$ of $x_i$ in $GL(D,q)$ and are independent of $S$. We thus find:
\begin{proposition}
 The set of vectors
 \begin{align}
     \{ \ket{[\mathcal{C}], S}_i : [\mathcal{C}] \in GL(D,q)/GL(D,q)_{x_i}, \ \mathcal{C}^t S \mathcal{C} \in \text{Sym}_{D,i}\}
     \label{thebas}
 \end{align}
 gives an orthonormal basis for the vector space $V_i$ which diagonalizes the matrices $\rho((\mathds{1},\mathcal{F}))$.
\end{proposition}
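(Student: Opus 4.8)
The statement bundles three claims: the listed vectors (i) diagonalize every $\rho((\mathds{1},\mathcal{F}))$, (ii) are orthonormal, and (iii) span $V_i$. Claim (i) is already established by the computation preceding the proposition, where $\rho((\mathds{1},\mathcal{F}))\ket{[\mathcal{C}],S}_i=\lambda_S(\mathcal{F})\ket{[\mathcal{C}],S}_i$, so the work lies in (ii) and (iii). The plan is to group the vectors by their coset label $[\mathcal{C}]$ and to show that the corresponding supports partition the neighborhood $X_i=\{y:\mathrm{dist}(y,x_0)=i\}$ into $H$-orbits. By construction $\ket{[\mathcal{C}],S}_i=\rho((\mathcal{C},0))\ket{[\mathds{1}],\mathcal{C}^tS\mathcal{C}}_i$ is a linear combination of the characteristic vectors $\ket{(\mathcal{C},0)(\mathds{1},\mathcal{F})x_i}$ with $\mathcal{F}\in\mathrm{Sym}_{D,i}$; using \eqref{switch} this support is exactly the $H$-orbit $O_{[\mathcal{C}]}$ through $(\mathcal{C},0)x_i$.

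The crux is to prove that distinct cosets give disjoint orbits, i.e. that the assignment $[\mathcal{C}]\mapsto O_{[\mathcal{C}]}$ is injective. Since two $H$-orbits are either equal or disjoint, it suffices to show that $(\mathcal{C},0)x_i$ and $(\mathcal{C}',0)x_i$ lie in the same orbit only when $(\mathcal{C}^{-1}\mathcal{C}',0)x_i=x_i$, that is when $[\mathcal{C}]=[\mathcal{C}']$. The key observation is that any subspace of the form $(\mathcal{C}'',0)x_i$ is ``split'': by \eqref{decomp} the element $(\mathcal{C}'',0)$ preserves both $E=\mathrm{span}_{\mathbb{F}_q}\{e_j\}$ and $F=\mathrm{span}_{\mathbb{F}_q}\{f_j\}$, so $(\mathcal{C}'',0)x_i$ is the direct sum of a $(D-i)$-dimensional subspace of $E$ and an $i$-dimensional subspace of $F$. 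On the other hand, computing the intersections of $(\mathds{1},\mathcal{F})x_i$ (as written in \eqref{masterac}) with $E$ and with $F$ shows that the only split subspace in the $H$-orbit of $x_i$ is $x_i$ itself: one always has $\dim((\mathds{1},\mathcal{F})x_i\cap E)=D-i$, while $\dim((\mathds{1},\mathcal{F})x_i\cap F)$ equals the dimension of the kernel of the bottom-right $i\times i$ block of $\mathcal{F}$, which reaches the split value $i$ precisely when that block vanishes, i.e. by Lemma \ref{lementries} precisely when $(\mathds{1},\mathcal{F})x_i=x_i$. Hence if $(\mathcal{C}^{-1}\mathcal{C}',0)x_i=(\mathds{1},\mathcal{F})x_i$ lies in the orbit, the left-hand side being split forces $(\mathds{1},\mathcal{F})x_i=x_i$ and therefore $(\mathcal{C}^{-1}\mathcal{C}',0)x_i=x_i$, giving $[\mathcal{C}]=[\mathcal{C}']$.

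With disjointness in hand, orthonormality follows immediately: two vectors with different cosets have disjoint supports and are orthogonal, while for a fixed coset the vectors $\{\ket{[\mathcal{C}],S}_i:\mathcal{C}^tS\mathcal{C}\in\mathrm{Sym}_{D,i}\}$ are the image of the orthonormal family $\{\ket{[\mathds{1}],T}_i:T\in\mathrm{Sym}_{D,i}\}$ under the unitary permutation operator $\rho((\mathcal{C},0))$, the bijection being $T=\mathcal{C}^tS\mathcal{C}$, and are thus orthonormal. For spanning, I would note that every point of $X_i=\rho(P)x_i$ lies in the orbit of some $(\mathcal{C},0)x_i$ (again via \eqref{switch}), so the orbits $O_{[\mathcal{C}]}$ partition $X_i$ and $V_i=\bigoplus_{[\mathcal{C}]}W_{[\mathcal{C}]}$ with $W_{[\mathcal{C}]}=\mathrm{span}_{\mathbb{C}}\{\ket{y}:y\in O_{[\mathcal{C}]}\}$. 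For each coset the $q^{i(i+1)/2}=|\mathrm{Sym}_{D,i}|$ orthonormal vectors $\ket{[\mathcal{C}],S}_i$ lie in $W_{[\mathcal{C}]}$ and match its dimension, hence form a basis of it; the union over cosets is then an orthonormal basis of $V_i$.

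The main obstacle is the injectivity of $[\mathcal{C}]\mapsto O_{[\mathcal{C}]}$, i.e. the split-subspace argument of the second paragraph; without it one cannot rule out that two different cosets contribute vectors carrying the same $H$-character inside the same orbit-subspace, which would break both linear independence and the dimension count. The remaining ingredients — the diagonalization identity, the within-orbit orthonormality via unitarity of $\rho((\mathcal{C},0))$, and the orbit partition of $X_i$ — are routine once the disjointness is secured.
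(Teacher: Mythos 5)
Your proof is correct. Its overall skeleton --- decompose the $i^{\text{th}}$ neighborhood into $H$-orbits indexed by the cosets $[\mathcal{C}]$, obtain within-orbit orthonormality by transporting the orthonormal family $\{\ket{[\mathds{1}],T}_i : T \in \text{Sym}_{D,i}\}$ with the unitary permutation operators $\rho((\mathcal{C},0))$, and close with the count $|\text{Sym}_{D,i}| = q^{i(i+1)/2}$ against the orbit size --- is essentially the argument the paper leaves implicit: there the proposition carries no separate proof and is presented as a direct consequence of the construction of $\ket{[\mathds{1}],S}_i$, Lemma \ref{lementries}, Lemma \ref{lemTest} and the orbit description \eqref{Vorb} of $V_i$. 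Where you genuinely add something is at the step you rightly single out as the crux: the disjointness of the orbits attached to distinct cosets, without which both orthogonality across cosets and the dimension count would collapse. Your split-subspace argument (the elements $(\mathcal{C},0)$ preserve $E=\text{span}_{\mathbb{F}_q}\{e_j\}$ and $F=\text{span}_{\mathbb{F}_q}\{f_j\}$ separately, while the intersection computations from \eqref{masterac} show that $x_i$ is the unique ``split'' member of its $H$-orbit) is sound and self-contained. The paper's own route to the same fact is different and dispersed: the Remark following Lemma \ref{lementries} states that all members of an $H$-orbit share the same intersection with $x_0$ (this is exactly your $E$-intersection computation, since $x_0 = E$), and distinct cosets give distinct intersections $(\mathcal{C},0)x_i \cap x_0 = \mathcal{C}\,\text{span}_{\mathbb{F}_q}\{e_1,\dots,e_{D-i}\}$ because the stabilizer of $x_i$ among the $(\mathcal{C},0)$ coincides with the stabilizer of $\text{span}_{\mathbb{F}_q}\{e_1,\dots,e_{D-i}\}$ --- the very fact invoked later when the paper asserts that the map $\phi$ of \eqref{defphi} is injective in section \ref{sbonus}. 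So your version trades that stabilizer identification for the extra $F$-intersection computation, yielding a more explicit and verifiable proof of the proposition itself, whereas the paper's implicit version is shorter given the Remark and is organized so as to feed directly into the subspace-lattice correspondence that follows.
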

\noindent Using the basis of eigenvectors $\ket{[\mathcal{C}], S}_i$, we can write
\begin{equation}
    \begin{split}
         V_i &= \text{span}_{\mathbb{C}}\{ \ket{[\mathcal{C}], S}_i : [\mathcal{C}] \in GL(D,q)/GL(D,q)_{x_i}, \ \mathcal{C}^t S \mathcal{C} \in \text{Sym}_{D,i}\}\\
         &= \bigoplus_{S \in \text{Sym}_D} W_i(S),
    \end{split}
    \label{holabra}
\end{equation}
where $W_i(S)$ is defined as
\begin{align}
    W_i(S) = \text{span}_{\mathbb{C}}\{\ket{[\mathcal{C}], S}_i : \mathcal{C} \text{ s.t. } \mathcal{C}^t S \mathcal{C} \in \text{Sym}_{D,i}\}.
\end{align}
By construction, we have 
\begin{align}
    \rho((\mathds{1},\mathcal{F})) W_i(S) = e^{-\frac{2 \pi i}{q} \text{tr}(S \mathcal{F})} W_i(S).
\end{align}
These modules $W_i(S)$ correspond to the eigenspaces of $H$ contained in the $i^{\text{th}}$ neighborhood of $x_0$. The total eigenspaces $W(S)$ are defined as
\begin{align}
    W(S) = \bigoplus_{i = 0}^D W_i(S).
\end{align}
From \eqref{decrom} and \eqref{holabra}, we see that they give a decomposition of $V$ in eigenspaces of $H$:
 \begin{align}
     V = \bigoplus_{S \in \text{Sym}_D} W(S),
 \end{align}
 where
\begin{align}
    \rho((\mathds{1},\mathcal{F})) W(S) = e^{-\frac{2 \pi i}{q} \text{tr}(S \mathcal{F})} W(S).
\end{align}

\section{The standard module of the subspace lattice}
\label{sbonus}
 
Our next goal is to show that there is a one-to-one correspondence between the basis vectors $\ket{[\mathcal{C}],S}_i$ of a submodule $W(S)$ and the characteristic vectors of a \textit{subspace lattice} $L_N(q)$. 

Subspace lattices $L_N(q)$ will be discussed in some length in section \ref{s6}. At this point, it will suffice to say that they are graphs with vertices (and therefore characteristic vectors) labeled by the subspaces $\mathbb{V}$ of a $N$-dimensional vector space $\mathbb{V}_0$ over $\mathbb{F}_q$. The standard module $V_{L_N(q)}$ of the subpace lattice $L_N(q)$ is given by
\begin{align}
    V_{L_N(q)} = \text{span}_{\mathbb{C}}\{\ket{\mathbb{V}} : \mathbb{V} \subseteq \mathbb{V}_0 \},
\end{align}
where $\mathbb{V}_0 \cong \mathbb{F}_q^N$ as a vector space.

Recall that $\ket{[\mathds{1}],S}_i$ is constructed from vectors associated to subspaces sharing the same intersection with $x_0$ as $x_i$, i.e. $x_i \cap x_0$. Since the group elements $(\mathcal{C},0) \in P$ preserve $x_0$, $\ket{[\mathcal{C}],S}_i$ is constructed from vectors associated to subspaces $x$ of intersection $(\mathcal{C},0) x_i \cap x_0$ with $x_0$. In other words, for $x \in X$, we find
\begin{align}
\bra{x}\ket{[\mathcal{C}],S}_i = 0,
\end{align}
whenever
\begin{align}
    x \cap x_0 \neq (\mathcal{C},0) x_i \cap x_0.
\end{align}
Consider the map $\phi$ from $W(S)$ to $V_{L_D(q)}$ which acts as
\begin{align}
     \phi\left(\ket{[\mathcal{C}], S}_i\right) = \ket{ (\mathcal{C},0)x_i \cap x_0}.
     \label{defphi}
\end{align}
Lemma \ref{lemTest} ensures that $\phi$ is injective. The basis vectors $\ket{[\mathcal{C}],S}_i$ are thus in one-to-one correspondence with their image in $V_{L_D(q)}$. The identification of $\phi(W(S))$ is the purpose of the following proposition and the central result of this section.
\begin{proposition}
 Take $S \in \text{Sym}_D$ and let $N = \text{dim}(\text{ker}(S))$. The map $\phi$ defined in \eqref{defphi} is an isomorphism between $W(S)$ and $V_{L_N(q)}$.
 \label{propopo}
\end{proposition}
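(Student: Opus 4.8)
The plan is to show that $\phi$ sends the orthonormal basis $\{\ket{[\mathcal{C}],S}_i\}$ of $W(S)$ bijectively onto the set of characteristic vectors of the subspaces of $\ker(S)$. Since $\ker(S)$ is an $N$-dimensional subspace of $x_0\cong\mathbb{F}_q^D$, its subspace lattice is a copy of $L_N(q)$, so a bijection of bases will upgrade at once to a linear isomorphism $W(S)\to V_{L_N(q)}$. Injectivity of $\phi$ is already guaranteed by Lemma \ref{lemTest}, so the real content is to identify the image precisely.

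First I would compute $\phi(\ket{[\mathcal{C}],S}_i)$ explicitly. Writing the action of $(\mathcal{C},0)$ on the symplectic basis as $(\mathcal{C},0)e_j=\mathcal{C}e_j$ and $(\mathcal{C},0)f_j=(\mathcal{C}^t)^{-1}f_j$, one obtains
\begin{align*}
(\mathcal{C},0)x_i=\text{span}_{\mathbb{F}_q}\{\mathcal{C}e_1,\dots,\mathcal{C}e_{D-i}\}\ \oplus\ \text{span}_{\mathbb{F}_q}\{(\mathcal{C}^t)^{-1}f_{D-i+1},\dots,(\mathcal{C}^t)^{-1}f_D\}.
\end{align*}
The first summand lies in $x_0=\text{span}_{\mathbb{F}_q}\{e_1,\dots,e_D\}$, while the second lies in $\text{span}_{\mathbb{F}_q}\{f_1,\dots,f_D\}$ and so meets $x_0$ trivially; hence any vector of the sum lying in $x_0$ has trivial $f$-component, and $(\mathcal{C},0)x_i\cap x_0=\text{span}_{\mathbb{F}_q}\{\mathcal{C}e_1,\dots,\mathcal{C}e_{D-i}\}$, a $(D-i)$-dimensional subspace of $x_0$. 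Thus $\phi(\ket{[\mathcal{C}],S}_i)=\ket{\text{span}_{\mathbb{F}_q}\{\mathcal{C}e_1,\dots,\mathcal{C}e_{D-i}\}}$, and in particular the dimension $D-i$ of the image records $i$.

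The key step is to translate the admissibility condition $\mathcal{C}^tS\mathcal{C}\in\text{Sym}_{D,i}$ into a geometric statement about this subspace. Using $(\mathcal{C}^tS\mathcal{C})_{mn}=(\mathcal{C}e_m)^tS(\mathcal{C}e_n)$ together with the fact that $\{\mathcal{C}e_n\}_{n=1}^D$ is a basis of $\mathbb{F}_q^D$, the vanishing of the entries with $m\le D-i$ (equivalently, by symmetry of $S$, with $n\le D-i$) is equivalent to $S\,\mathcal{C}e_m=0$ for all $m\le D-i$. This yields the equivalence
\begin{align*}
\mathcal{C}^tS\mathcal{C}\in\text{Sym}_{D,i}\quad\iff\quad \text{span}_{\mathbb{F}_q}\{\mathcal{C}e_1,\dots,\mathcal{C}e_{D-i}\}\subseteq\ker(S).
\end{align*}
I expect this to be the crux of the argument: it is where the invariant $N=\dim(\ker(S))$ enters, forcing $W_i(S)=0$ unless $D-i\le N$ and identifying the admissible images as exactly the $(D-i)$-dimensional subspaces of $\ker(S)$.

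Finally I would invoke the transitive action of $GL(D,q)$ on $(D-i)$-dimensional subspaces. By Lemma \ref{lemTest} the classes $[\mathcal{C}]$ are the cosets of the stabilizer $GL(D,q)_{x_i}$, and the assignment $[\mathcal{C}]\mapsto\text{span}_{\mathbb{F}_q}\{\mathcal{C}e_1,\dots,\mathcal{C}e_{D-i}\}$ is a bijection between $GL(D,q)/GL(D,q)_{x_i}$ and the $(D-i)$-dimensional subspaces of $x_0$. Combined with the equivalence above, $\phi$ maps the basis of $W_i(S)$ bijectively onto the characteristic vectors of the $(D-i)$-dimensional subspaces of $\ker(S)$; letting $i$ range over $D-N\le i\le D$ and using $W(S)=\bigoplus_i W_i(S)$, it carries the full basis of $W(S)$ bijectively onto $\{\ket{\mathbb{V}}:\mathbb{V}\subseteq\ker(S)\}$. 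As these are precisely the characteristic vectors spanning $V_{L_N(q)}$, the injective linear map $\phi$ takes a basis to a basis and is therefore an isomorphism $W(S)\cong V_{L_N(q)}$.
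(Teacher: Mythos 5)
Your proof is correct, but it follows a genuinely different route from the paper's. The paper argues structurally: a transformation lemma $\rho((\mathcal{C},0))W(S)=W((\mathcal{C}^{-1})^tS\mathcal{C}^{-1})$ and Lemma \ref{lemmod} reduce to the case where $S\in\text{Sym}_{D,i_0}$ has a non-degenerate bottom-right block, one then shows $\text{dim}(W_{i_0}(S))=1$, then Lemma \ref{masterlem} gives the inclusion $\mathbb{V}\subseteq\mathbb{V}_0$, and finally surjectivity is proved by an explicit construction of a preimage for each $\mathbb{V}\subseteq\mathbb{V}_0$. You bypass all of this with one computation and one equivalence: $\phi\left(\ket{[\mathcal{C}],S}_i\right)=\ket{\mathcal{C}\,(x_i\cap x_0)}$ together with $\mathcal{C}^tS\mathcal{C}\in\text{Sym}_{D,i}\iff \mathcal{C}\,(x_i\cap x_0)\subseteq\text{ker}(S)$; your argument for the latter (fixing $m\le D-i$ and letting $n$ run over the basis $\{\mathcal{C}e_n\}$, then using symmetry of $S$) is valid. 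This single equivalence subsumes the paper's intermediate lemmas: it forces $W_i(S)=0$ for $i<D-N$, exhibits the minimal nonzero space $W_{D-N}(S)$ as one-dimensional (the only $N$-dimensional subspace of $\text{ker}(S)$ is $\text{ker}(S)$ itself, so $i_0=D-N$ and $\mathbb{V}_0=\text{ker}(S)$), and identifies the image of the basis of $W_i(S)$ with the $(D-i)$-dimensional subspaces of $\text{ker}(S)$, giving injectivity and surjectivity at once. What each approach buys: yours is shorter and makes the appearance of $N=\text{dim}(\text{ker}(S))$ completely transparent, while the paper's canonical-form reduction is not wasted since it is reused later (in the definition of the type $\epsilon$ and in the Appendix A computations). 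One step you should spell out: the asserted bijection $[\mathcal{C}]\mapsto\text{span}_{\mathbb{F}_q}\{\mathcal{C}e_1,\dots,\mathcal{C}e_{D-i}\}$ between $GL(D,q)/GL(D,q)_{x_i}$ and the $(D-i)$-dimensional subspaces of $x_0$ requires that the stabilizer of $x_i$ in $GL(D,q)$ coincide with the stabilizer of the subspace $x_i\cap x_0$; this holds because if $\mathcal{C}$ preserves $\text{span}_{\mathbb{F}_q}\{e_1,\dots,e_{D-i}\}$ (lower-left block of $\mathcal{C}$ vanishes) then $(\mathcal{C}^t)^{-1}$ automatically preserves $\text{span}_{\mathbb{F}_q}\{f_{D-i+1},\dots,f_D\}$, so $(\mathcal{C},0)x_i=x_i$ --- a one-line block-matrix check, but it is exactly what the injectivity and well-definedness of your basis correspondence rest on.
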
 
\noindent The proof of this proposition requires a few lemmas.

\begin{lemma}
For any $S \in \text{Sym}_D$ and $\mathcal{C} \in GL(D,q)$, we have
\begin{align}
    \rho((\mathcal{C},0)) W(S) = W((\mathcal{C}^{-1})^t S \mathcal{C}^{-1}).
\end{align}
\end{lemma}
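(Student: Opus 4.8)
The plan is to exploit the characterization of $W(S)$ as the eigenspace of $H$ on which every $\rho((\mathds{1},\mathcal{F}))$ acts by the scalar $e^{-\frac{2\pi i}{q}\text{tr}(S\mathcal{F})}$. I would take an arbitrary $v \in W(S)$, set $w = \rho((\mathcal{C},0))v$, and determine the character by which $H$ acts on $w$. Since $\rho$ is a representation, $\rho((\mathds{1},\mathcal{F}))\rho((\mathcal{C},0)) = \rho\big((\mathds{1},\mathcal{F})\cdot(\mathcal{C},0)\big)$, and the switch relation \eqref{switch} rewrites this product as $(\mathcal{C},0)\cdot(\mathds{1},\mathcal{C}^{-1}\mathcal{F}(\mathcal{C}^{-1})^t)$. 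Hence $\rho((\mathds{1},\mathcal{F}))w = \rho((\mathcal{C},0))\rho((\mathds{1},\mathcal{C}^{-1}\mathcal{F}(\mathcal{C}^{-1})^t))v$, and applying the eigenvalue of $v$ gives $\rho((\mathds{1},\mathcal{F}))w = e^{-\frac{2\pi i}{q}\text{tr}(S\,\mathcal{C}^{-1}\mathcal{F}(\mathcal{C}^{-1})^t)}\,w$.

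Next I would simplify the character using the cyclic invariance of the trace: $\text{tr}(S\,\mathcal{C}^{-1}\mathcal{F}(\mathcal{C}^{-1})^t) = \text{tr}\big((\mathcal{C}^{-1})^t S\,\mathcal{C}^{-1}\,\mathcal{F}\big)$. This exhibits $w$ as an eigenvector of every $\rho((\mathds{1},\mathcal{F}))$ with exactly the character defining $W\big((\mathcal{C}^{-1})^t S \mathcal{C}^{-1}\big)$, so that $\rho((\mathcal{C},0))W(S) \subseteq W\big((\mathcal{C}^{-1})^t S \mathcal{C}^{-1}\big)$.

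Finally I would upgrade this inclusion to an equality. Since $(\mathcal{C},0)\cdot(\mathcal{C}^{-1},0) = (\mathds{1},0)$, the operator $\rho((\mathcal{C},0))$ is invertible with inverse $\rho((\mathcal{C}^{-1},0))$. Applying the inclusion already proven to $\mathcal{C}^{-1}$ and to the matrix $S' = (\mathcal{C}^{-1})^t S \mathcal{C}^{-1}$, and noting that $\mathcal{C}^t S' \mathcal{C} = S$ (because $\mathcal{C}^t(\mathcal{C}^{-1})^t = \mathds{1}$ and $\mathcal{C}^{-1}\mathcal{C} = \mathds{1}$), yields $\rho((\mathcal{C}^{-1},0))W(S') \subseteq W(S)$; applying $\rho((\mathcal{C},0))$ to both sides gives the reverse inclusion $W(S') \subseteq \rho((\mathcal{C},0))W(S)$, and the two inclusions together give the claimed equality.

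The computation is essentially bookkeeping, so the only real pitfall I anticipate is tracking the transposes and inverses correctly through the switch relation and the trace rearrangement, so that the congruence transformation lands as $(\mathcal{C}^{-1})^t S \mathcal{C}^{-1}$ rather than one of its near-variants; a secondary point is to justify equality rather than mere inclusion cleanly, which the invertibility argument above handles without any dimension count.
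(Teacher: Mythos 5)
Your proof is correct, but it follows a genuinely different route from the paper's. The paper argues directly on basis vectors: writing $\ket{[\mathcal{C}'],S}_i = \rho((\mathcal{C}',0))\ket{[\mathds{1}],\mathcal{C}'^t S\mathcal{C}'}_i$, using $\rho((\mathcal{C},0))\rho((\mathcal{C}',0)) = \rho((\mathcal{C}\mathcal{C}',0))$ and the identity $\mathcal{C}'^t S\mathcal{C}' = (\mathcal{C}\mathcal{C}')^t\left[(\mathcal{C}^{-1})^t S\mathcal{C}^{-1}\right](\mathcal{C}\mathcal{C}')$, it shows in one line that $\rho((\mathcal{C},0))\ket{[\mathcal{C}'],S}_i = \ket{[\mathcal{C}\mathcal{C}'],(\mathcal{C}^{-1})^t S\mathcal{C}^{-1}}_i$, so the spanning set of $W(S)$ lands inside the spanning set of $W((\mathcal{C}^{-1})^t S\mathcal{C}^{-1})$; invertibility of $\mathcal{C}$ then upgrades the inclusion to equality, just as in your final step. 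You instead work with arbitrary vectors and track the character by which $H$ acts, via the switch relation \eqref{switch} and cyclicity of the trace. Both are sound, but note that your forward inclusion uses one input beyond the definition of $W(S')$ as a span: you need to know that \emph{every} vector of $V$ on which each $\rho((\mathds{1},\mathcal{F}))$ acts by the scalar $e^{-\frac{2\pi i}{q}\text{tr}(S'\mathcal{F})}$ actually lies in $W(S')$, i.e. that $W(S')$ is the full eigenspace for that character. This does hold, because $V = \bigoplus_{S''\in\text{Sym}_D}W(S'')$ and the characters $\lambda_{S''}$ are pairwise distinct (the case $i=D$ of the paper's non-degeneracy statement for the trace pairing on $\text{Sym}_{D,i}$), but it should be said explicitly since the paper defines $W(S')$ only as a span. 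The paper's computation avoids this ingredient entirely by never leaving the explicit spanning set; in exchange, your argument is basis-free, dispenses with tracking the coset labels $[\mathcal{C}\mathcal{C}']$ and the condition $\mathcal{C}^t S\mathcal{C}\in\text{Sym}_{D,i}$, and makes the reverse inclusion completely mechanical.
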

\begin{proof}
 Let $\ket{[\mathcal{C}'],S }_i$ be a basis vector of $W(S)$. We find that
\begin{equation}
    \begin{split}
        \rho((\mathcal{C},0)) \ket{[\mathcal{C}'],S }_{i} &= \rho((\mathcal{C},0)\cdot(\mathcal{C}',0))\ket{[\mathds{1}], {\mathcal{C}'}^t S \mathcal{C}'}_{i}, \\
        &=  \rho((\mathcal{C}\mathcal{C}',0))\ket{[\mathds{1}], {\mathcal{C}'}^t\mathcal{C}^t(\mathcal{C}^{-1})^t S \mathcal{C}^{-1}\mathcal{C}\mathcal{C}'}_{i},\\
        & = \ket{[\mathcal{C}\mathcal{C'}],(\mathcal{C}^{-1})^t S \mathcal{C}^{-1} }_{i}.
    \end{split}
\end{equation}
Thus, we get that $\rho((\mathcal{C},0)) W(S) \subseteq W((\mathcal{C}^{-1})^t S \mathcal{C}^{-1})$. Since $\mathcal{C}$ is invertible, we find $\rho((\mathcal{C},0)) W(S) \supseteq W((\mathcal{C}^{-1})^t S \mathcal{C}^{-1})$ and the lemma follows. 
\end{proof}

In the following, we denote $(M)_{{a \leq m \leq b, \ c \leq n \leq D }}$ the submatrix of a matrix $M$ formed by its rows $m$   and its columns $n$ verifying $ a \leq m  \leq b$ and $c \leq n  \leq D$.

\begin{lemma}
Given a module $W(S)$, there exists a matrix $\mathcal{C}\in GL(D,q)$ and a matrix $S' \in \text{Sym}_{D,i}$ for some $i \in \{0,1,\dots, D\}$, such that the non-zero submatrix $(S')_{D-i < m,n \leq D}$ of $S'$ verifies
\begin{align}
   \text{dim}(\text{ker}((S')_{ D-i < m,n \leq D} )) = 0
   \label{lem11}
\end{align}
and such that
\begin{align}
   \rho((\mathcal{C},0)) W(S) = W(S').
   \label{lem12}
\end{align}
\label{lemmod}
\end{lemma}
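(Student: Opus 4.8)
The plan is to reduce the statement to a standard fact about congruence of symmetric bilinear forms. By the preceding lemma, $\rho((\mathcal{C},0)) W(S) = W((\mathcal{C}^{-1})^t S \mathcal{C}^{-1})$, so \eqref{lem12} holds precisely when $(\mathcal{C}^{-1})^t S \mathcal{C}^{-1} = S'$. Writing $P = \mathcal{C}^{-1} \in GL(D,q)$, this is the congruence $P^t S P = S'$. Hence it suffices to show that any $S \in \text{Sym}_D$ is congruent to some $S' \in \text{Sym}_{D,i}$ whose bottom-right $i \times i$ block is invertible, where $i = \text{rank}(S)$.

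To produce $P$, I would work with the symmetric bilinear form $\mathfrak{b}(u,v) = u^t S v$ on $\mathbb{F}_q^D$, whose radical is exactly $\text{ker}(S)$, of dimension $N = D - i$. First I would fix a decomposition $\mathbb{F}_q^D = \text{ker}(S) \oplus U$ and pick a basis $v_1, \dots, v_D$ adapted to it, with $v_1, \dots, v_{D-i}$ spanning $\text{ker}(S)$ and $v_{D-i+1}, \dots, v_D$ spanning the complement $U$. Taking $P$ to be the matrix with these columns, the entries of $S' := P^t S P$ are the Gram values $\mathfrak{b}(v_m, v_n)$. Any entry with $m \leq D-i$ or $n \leq D-i$ vanishes because $\text{ker}(S)$ is orthogonal to all of $\mathbb{F}_q^D$, so $S' \in \text{Sym}_{D,i}$ as required.

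It then remains to check \eqref{lem11}, i.e. that the bottom-right block $(S')_{D-i < m,n \leq D}$, which is the Gram matrix of $\mathfrak{b}$ restricted to $U$, is nondegenerate. The argument is the usual complement-of-the-radical one: if $u \in U$ satisfies $\mathfrak{b}(u,u') = 0$ for every $u' \in U$, then since $u$ is automatically orthogonal to $\text{ker}(S)$ it is orthogonal to all of $\mathbb{F}_q^D = \text{ker}(S) \oplus U$, so $u \in \text{ker}(S) \cap U = \{0\}$. Thus the restricted form has trivial radical, its Gram matrix is invertible, and $\text{dim}(\text{ker}((S')_{D-i < m,n \leq D})) = 0$. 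Setting $\mathcal{C} = P^{-1}$ and invoking the preceding lemma then yields $\rho((\mathcal{C},0))W(S) = W(S')$.

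The only delicate point is that this must hold for every prime $q$, including $q = 2$, where a symmetric form may be alternating and therefore not diagonalizable by congruence. I expect this to be the main thing to get right, but it causes no real obstacle here: I never attempt to diagonalize $S$, only to split off a nondegenerate block on a complement of the radical, and that splitting argument uses nothing beyond the definition of the radical and is valid in every characteristic.
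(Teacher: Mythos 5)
Your proof is correct, but it takes a genuinely different route from the paper's. The paper argues inside the module itself: it lets $i_0$ be minimal with $W_{i_0}(S) \neq 0$, picks a vector $\rho((\mathcal{C}_1,0))\ket{[\mathds{1}],\mathcal{C}_1^t S\mathcal{C}_1}_{i_0}$ with $\mathcal{C}_1^t S \mathcal{C}_1 \in \text{Sym}_{D,i_0}$, and proceeds by contradiction: if the bottom-right $i_0\times i_0$ block of $\mathcal{C}_1^t S \mathcal{C}_1$ had a nonzero kernel vector $v$, one could form a block-diagonal $\mathcal{C}_2$ whose lower $i_0 \times i_0$ block is invertible with first column $v$, so that $(\mathcal{C}_1\mathcal{C}_2)^t S\, \mathcal{C}_1\mathcal{C}_2 \in \text{Sym}_{D,i_0-1}$, producing a nonzero vector in $W_{i_0-1}(S)$ and contradicting the minimality of $i_0$; nondegeneracy of the block follows and $\mathcal{C} = \mathcal{C}_1^{-1}$ finishes via the preceding lemma. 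You instead reduce everything, through that same preceding lemma, to the classical linear-algebra fact that any symmetric matrix over a field is congruent to one whose nonzero part is a nondegenerate block, proved by splitting off the radical $\ker(S)$ and restricting the form to a complement $U$; your verification that the restricted Gram matrix is invertible and that $S' \in \text{Sym}_{D,\operatorname{rank}(S)}$ is sound, and your closing remark about characteristic $2$ is apt — radical splitting requires no diagonalization, so $q=2$ poses no obstacle. What your route buys: a self-contained, constructive argument that works uniformly in all characteristics and identifies $i$ explicitly as $\operatorname{rank}(S)$, independent of any property of the modules $W_i(S)$. What the paper's route buys: as a byproduct it shows that the minimal level $i_0$ at which $W(S)$ is supported equals $\operatorname{rank}(S)$, and it keeps the reasoning within the combinatorial framework (the vectors $\ket{[\mathcal{C}],S}_i$ and the sets $\text{Sym}_{D,i}$) that the subsequent lemmas of that section reuse.
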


\begin{proof}
 Let $i_0$ be the smallest integer in $\{0,1,2,\dots,D\}$ such that 
\begin{align}
   \text{dim}(W_{i_0}(S)) \neq 0.
   \label{assum}
\end{align}
\noindent By construction, there exists a vector  $ \ket{\psi}$ in $W_{i_0}(S)$ and an element $(\mathcal{C}_1,0) \in P$ verifying
\begin{align}
    \ket{\psi} = \rho((\mathcal{C}_1,0)) \ket{[\mathds{1}],\mathcal{C}_1^t S \mathcal{C}_1}_{i_0},
\end{align}
\noindent with $\mathcal{C}_1^t S \mathcal{C}_1 \in \text{Sym}_{D,i_0}$. Now, let us assume that 
\begin{align}
    \text{dim}(\text{ker}((\mathcal{C}_1^t S \mathcal{C}_1)_{D-i_0 < m,n \leq D} )) \neq 0.
    \label{kercon}
\end{align}
\noindent We can then use any vector $v \in \mathbb{F}_q^{i_0}$ in the kernel to construct the following matrix $\mathcal{C}_2$:
\begin{align}
    \mathcal{C}_2 = 
    \begin{pmatrix}
\mathds{1}_{D-i_0 \times D-i_0} &  0\\
0 & (\mathcal{C}_2)_{D-i_0 < m,n \leq D}
\end{pmatrix},
\end{align}
\noindent where $(\mathcal{C}_2)_{D-i < m,n \leq D}$ is non-singular and has $v$ as its first column. One can check that $\mathcal{C}_2^t\mathcal{C}_1^t S \mathcal{C}_1\mathcal{C}_2 \in \text{Sym}_{D,i_0 -1}$ and thus 
\begin{align}
    \rho((\mathcal{C}_1\mathcal{C}_2, 0)) \ket{[\mathds{1}],(\mathcal{C}_1\mathcal{C}_2)^tS\mathcal{C}_1\mathcal{C}_2}_{i_0 - 1} = \ket{[\mathcal{C}_1\mathcal{C}_2],S}_{i_0 - 1} \in W_{i_0 - 1}(S).
\end{align}
Therefore, we get that $W_{i_0-1}(S)$ is not empty which contradicts the assumption that $i_0$ is the smallest integer such that \eqref{assum} is verified. We conclude that the kernel in \eqref{kercon} contains only the vector $0$ and we find \eqref{lem12} by applying lemma \ref{lemmod} with $\mathcal{C} = \mathcal{C}_1^{-1}$.
\end{proof}

\begin{lemma}
Let $i_0$ be the smallest integer such that $\text{dim}(W_{i_0}(S)) \neq 0$. Then,
\begin{align}
    \text{dim}(W_{i_0}(S)) = 1.
\end{align}
\end{lemma}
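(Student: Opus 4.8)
The plan is to reduce to a convenient normal form and then run an explicit block-matrix computation. First I would invoke Lemma~\ref{lemmod} to replace $S$ by a matrix $S' \in \text{Sym}_{D,i_0}$ whose bottom-right $i_0 \times i_0$ block $\Sigma := (S')_{D-i_0 < m,n \le D}$ is non-singular. Since the intertwiner $\rho((\mathcal{C},0))$ is invertible and preserves each neighborhood $V_i$ — the element $(\mathcal{C},0)$ fixes $x_0$ and is a graph automorphism, hence an isometry for $\text{dist}(\cdot,x_0)$ — it restricts to a grading-preserving linear isomorphism $W(S) \to W(S')$. Therefore $\dim W_j(S) = \dim W_j(S')$ for all $j$; in particular the smallest nonvanishing index is again $i_0$ and $\dim W_{i_0}(S) = \dim W_{i_0}(S')$. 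It thus suffices to prove $\dim W_{i_0}(S') = 1$ for $S' = \begin{pmatrix} 0 & 0 \\ 0 & \Sigma \end{pmatrix}$ with $\Sigma$ non-singular symmetric.

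For the normal form I would show that every basis vector $\ket{[\mathcal{C}],S'}_{i_0}$ of $W_{i_0}(S')$ coincides with $\ket{[\mathds{1}],S'}_{i_0}$. By definition such a vector exists only when $\mathcal{C}^t S' \mathcal{C} \in \text{Sym}_{D,i_0}$. Writing $\mathcal{C} = \begin{pmatrix} A & B \\ C & E \end{pmatrix}$ in the block form adapted to the splitting $D = (D-i_0)+i_0$, a direct multiplication gives $\mathcal{C}^t S' \mathcal{C} = \begin{pmatrix} C^t \Sigma C & C^t \Sigma E \\ E^t \Sigma C & E^t \Sigma E \end{pmatrix}$. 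Membership in $\text{Sym}_{D,i_0}$ forces the top-left and off-diagonal blocks to vanish, i.e. $C^t \Sigma C = 0$ and $C^t \Sigma E = 0$.

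The crux is to deduce $C = 0$ from these relations. Here I would use a rank count: since $\mathcal{C}$ is invertible, $\text{rank}(\mathcal{C}^t S' \mathcal{C}) = \text{rank}(S') = i_0$, while the vanishing blocks leave $\mathcal{C}^t S' \mathcal{C} = \begin{pmatrix} 0 & 0 \\ 0 & E^t \Sigma E \end{pmatrix}$, whose rank is at most $\text{rank}(E) \le i_0$. Equality forces $E$ to be invertible; then $C^t \Sigma E = 0$ together with the invertibility of $\Sigma$ and $E$ yields $C = 0$. Consequently $\mathcal{C}$ is block upper-triangular, which is exactly the condition for $(\mathcal{C},0)$ to stabilize $x_{i_0}$, so $\mathcal{C} \in GL(D,q)_{x_{i_0}}$ and hence $[\mathcal{C}] = [\mathds{1}]$ by Lemma~\ref{lemTest}. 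Thus $W_{i_0}(S')$ is spanned by the single vector $\ket{[\mathds{1}],S'}_{i_0}$, giving $\dim W_{i_0}(S') = 1$.

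I expect the main obstacle to be the step extracting $C = 0$: the quadratic relation $C^t \Sigma C = 0$ alone does not force $C = 0$ (totally isotropic subspaces exist over $\mathbb{F}_q$), so it is essential to combine it with the rank identity coming from the invertibility of $\mathcal{C}$ in order to first establish that $E$ is non-singular, and only then conclude from $C^t \Sigma E = 0$. A secondary point to handle carefully is the identification of $GL(D,q)_{x_{i_0}}$ with block upper-triangular matrices, which relies on the fact that $(\mathcal{C},0)$ acts as $\mathcal{C}$ on $\text{span}_{\mathbb{F}_q}\{e_1,\dots,e_{D-i_0}\}$ and as $(\mathcal{C}^t)^{-1}$ on $\text{span}_{\mathbb{F}_q}\{f_{D-i_0+1},\dots,f_D\}$; I would verify that both stabilization conditions collapse to the single requirement that the lower-left block $C$ vanishes. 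The boundary case $i_0 = 0$ is immediate, since $V_0$ is one-dimensional and hence so is any nonzero $W_0(S)$.
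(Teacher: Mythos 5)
Your proof is correct and takes essentially the same route as the paper: reduce via Lemma~\ref{lemmod} to $S'\in\text{Sym}_{D,i_0}$ with non-singular bottom-right block, show that for any basis vector $\ket{[\mathcal{C}],S'}_{i_0}$ the lower-left block of $\mathcal{C}$ must vanish, conclude that $(\mathcal{C},0)$ stabilizes $x_{i_0}$, and apply Lemma~\ref{lemTest} to get $[\mathcal{C}]=[\mathds{1}]$. Your explicit block multiplication and rank count simply fill in the step the paper only asserts (namely that $(\mathcal{C})_{D-i_0<m\le D,\ 1\le n\le D-i_0}=0$), so the two arguments coincide in substance.
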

\begin{proof}
 By the previous lemma, we only need to check this for the modules $W(S)$ associated to matrices $S \in \text{Sym}_{D,i_0}$ verifying
\begin{align}
    \text{dim}(\text{ker}((S)_{D-i_0 < m,n \leq D} )) = 0.
        \label{kerker}
\end{align}
\noindent The results for the other modules $W(S)$ are obtained using maps $(\mathcal{C},0)$ in $P$. Since $S \in \text{Sym}_{D,i_0}$, we get that
\begin{align}
    \ket{[\mathds{1}],S}_{i_0} \in W_{i_0}(S).
\end{align}
\noindent Let us assume that there is another basis vector  $\ket{[\mathcal{C}], S}_{i_0}$ in $W_{i_0}(S)$. By construction, we have 
\begin{align}
  \ket{[\mathcal{C}], S}_{i_0} = \rho((\mathcal{C},0)) \ket{[\mathds{1}], S'}_{i_0} \in W_{i_0}(S),
   \label{sec}
\end{align}
\noindent with $S' = \mathcal{C}^t S \mathcal{C} \in \text{Sym}_{D, i_0}$. Since $\mathcal{C}$ is non-singular, \eqref{kerker} implies that
\begin{align}
    \text{dim}(\text{ker}((S')_{D-i_0 < m,n \leq D}) ) = 0.
    \label{sec2}
\end{align}
Equations \eqref{sec}, \eqref{sec2} and the fact that $S,S' \in \text{Sym}_{D,i_0}$ impose restrictions on the nature of $\mathcal{C}$. Indeed, one finds that
\begin{align}
    (\mathcal{C})_{{D-i_0<m \leq D, \  1 \leq n \leq D-i_0}} = 0.
\end{align}
\noindent Yet, this condition ensures that
\begin{align}
    \rho((\mathcal{C},0))\ket{x_i} = \ket{x_i}.
    \label{stabilili}
\end{align}
Applying lemma \ref{lemTest}, one gets 
\begin{align}
    \rho((\mathcal{C},0)) \ket{[\mathds{1}], S'}_{i_0}  =  \ket{[\mathds{1}], S}_{i_0},
\end{align}
which implies that
\begin{align}
    \ket{[\mathcal{C}], S}_{i_0} = \ket{[\mathds{1}], S}_{i_0}
\end{align}
and that $W_{i_0}(S)$ is one-dimensional. 
\end{proof}
\begin{lemma}
Let $i_0$ be the smallest integer such that $\text{dim}(W_{i_0}(S)) \neq 0$. Let $\ket{[\mathcal{C}],S}_{i_0}$ be the basis vector of the one-dimensional $W_{i_0}(S)$ and  $\ket{[\mathcal{C}'],S}_{i}$ be any basis vector of $W_i(S)$, $i \geq i_0$. If $\mathbb{V}_0$ and $\mathbb{V}$ are the intersections with $x_0$ shared by all the subspaces $x$ such that $\bra{x}\ket{[\mathcal{C}],S}_{i_0} \neq 0$ and $\bra{x}\ket{[\mathcal{C}'],S}_{i} \neq 0$ respectively, then
\begin{align}
    \mathbb{V} \subseteq \mathbb{V}_0.
\end{align}
\label{masterlem}
\end{lemma}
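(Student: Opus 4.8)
The plan is to compute the two intersection subspaces $\mathbb{V}$ and $\mathbb{V}_0$ explicitly, show that both are contained in $\text{ker}(S)$, and then argue that $\mathbb{V}_0$ fills $\text{ker}(S)$ completely. First I would recall, from the discussion preceding \eqref{defphi}, that a basis vector $\ket{[\mathcal{C}'],S}_i$ is supported only on vertices $x$ with $x \cap x_0 = (\mathcal{C}',0)x_i \cap x_0$. Since $(\mathcal{C}',0)$ preserves $x_0$ and acts on it as $\mathcal{C}'$, and since $x_i \cap x_0 = \text{span}_{\mathbb{F}_q}\{e_1,\dots,e_{D-i}\}$, this common intersection is $\mathbb{V} = \mathcal{C}'\cdot\text{span}_{\mathbb{F}_q}\{e_1,\dots,e_{D-i}\}$, i.e. the span of the first $D-i$ columns of $\mathcal{C}'$ once we identify $\mathbb{F}_q^D$ with $x_0$ via $\{e_1,\dots,e_D\}$. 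Likewise $\mathbb{V}_0 = \mathcal{C}\cdot\text{span}_{\mathbb{F}_q}\{e_1,\dots,e_{D-i_0}\}$ is the span of the first $D-i_0$ columns of $\mathcal{C}$.

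The key step is to translate the defining condition $\mathcal{C}'^t S \mathcal{C}' \in \text{Sym}_{D,i}$ into a statement about these columns. Writing $c_m'$ for the $m$-th column of $\mathcal{C}'$, one has $(\mathcal{C}'^t S \mathcal{C}')_{mn} = (c_m')^t S\, c_n'$, and membership in $\text{Sym}_{D,i}$ means this vanishes whenever $m \leq D-i$ or $n \leq D-i$. Fixing any $m \leq D-i$, the vanishing $(c_m')^t S\, c_n' = 0$ for all $n$, together with the fact that the columns of the invertible matrix $\mathcal{C}'$ span $\mathbb{F}_q^D$ and that $S$ is symmetric, forces $S c_m' = 0$. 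Hence the first $D-i$ columns of $\mathcal{C}'$ lie in $\text{ker}(S)$, so that $\mathbb{V} \subseteq \text{ker}(S)$. The identical argument applied to the lowest-level vector $\ket{[\mathcal{C}],S}_{i_0}$ gives $\mathbb{V}_0 \subseteq \text{ker}(S)$.

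It remains to see that this containment is an equality for $\mathbb{V}_0$. Here I would invoke the preceding lemmas: Lemma \ref{lemmod} reduces $W(S)$, via some $\rho((\mathcal{C},0))$ acting as the congruence $S \mapsto (\mathcal{C}^{-1})^t S \mathcal{C}^{-1}$, to a module whose defining matrix has a nonsingular bottom-right $i_0 \times i_0$ block; since congruence preserves rank, this gives $\text{rank}(S) = i_0$ and $\text{dim}(\text{ker}(S)) = D - i_0$, which is precisely $N$. As $\text{dim}(\mathbb{V}_0) = D - i_0 = \text{dim}(\text{ker}(S))$ and $\mathbb{V}_0 \subseteq \text{ker}(S)$, we conclude $\mathbb{V}_0 = \text{ker}(S)$. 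Combined with $\mathbb{V} \subseteq \text{ker}(S)$ this yields $\mathbb{V} \subseteq \mathbb{V}_0$, as claimed.

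I expect the main obstacle to be the middle step: extracting $S c_m' = 0$ from the block condition requires using both the invertibility of $\mathcal{C}'$ — so that orthogonality of $c_m'$ to $S c_n'$ for every column $c_n'$ upgrades to annihilation $S c_m' = 0$ — and the symmetry of $S$ to pass between left and right kernels. The dimension count of the last paragraph is then routine, once the relation $i_0 = D - N$ is read off from Lemma \ref{lemmod} and the established one-dimensionality of $W_{i_0}(S)$.
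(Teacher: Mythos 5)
Your proof is correct, and it reorganizes the paper's argument rather than reproducing it. The paper proceeds by normalization: invoking Lemma \ref{lemmod}, it assumes without loss of generality that $S\in\text{Sym}_{D,i_0}$ with nonsingular lower-right block, so that the basis vector of $W_{i_0}(S)$ is $\ket{[\mathds{1}],S}_{i_0}$ and $\mathbb{V}_0=x_{i_0}\cap x_0$ coincides with $\ker(S)$ by inspection; it then proves $\mathbb{V}\subseteq\ker(S)$ via the kernel-mapping identity $\mathcal{C}'\,\ker(S')=\ker(S)$ for $S'=(\mathcal{C}')^{t}S\mathcal{C}'$, and finally transfers the inclusion back to general $S$ through the invertible action of $(\mathcal{C},0)$. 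Your column computation is the same mathematics as that kernel identity — the vanishing of $(c_m')^{t}Sc_n'$ for all $n$, plus invertibility of $\mathcal{C}'$ and symmetry of $S$, says precisely that $\mathcal{C}'$ carries the coordinate subspace inside $\ker(S')$ into $\ker(S)$ — but you apply it invariantly to both $\mathcal{C}$ and $\mathcal{C}'$, and you replace the paper's normalization by the dimension count $\dim\mathbb{V}_0=D-i_0=\dim\ker(S)$. What your route buys: no WLOG reduction, no back-transfer step (the paper's closing claim that the intersections transform to $(\mathcal{C},0)\mathbb{V}_0$ and $(\mathcal{C},0)\mathbb{V}$ is its least explicit moment), and the intrinsic identification $\mathbb{V}_0=\ker(S)$, which also makes transparent why $N=\dim(\ker(S))$ is the natural parameter in Proposition \ref{propopo}. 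What the paper's route buys: it never needs $\text{rank}(S)=i_0$ as a separate fact. On that point, one detail in your argument deserves a sentence: the statement of Lemma \ref{lemmod} only provides a nonsingular block of size $i\times i$ for some unspecified $i$; the identification $i=i_0$ is implicit in that lemma's proof, or can be recovered from your own computation, which gives $\text{rank}(S)\le i_0$, combined with the observation that $\rho((\mathcal{C},0))$ preserves the grading by distance from $x_0$ and $\ket{[\mathds{1}],S'}_i\in W_i(S')$ is nonzero, forcing $i\ge i_0$ and hence $i=i_0$.
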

\begin{proof}
Again, we assume that $S \in \text{Sym}_{D, i_0}$ and $\text{dim}(\text{ker}((S)_{ D-i_0< m,n \leq D} ) )= 0$. The basis vector of the one-dimensional $W_{i_0}(S)$ is thus $\ket{[\mathds{1}],S}_{i_0}$ and we have
 \begin{align}
     \mathbb{V}_0 = x_{i_0} \cap x_0.
 \end{align}
 Now, consider
 \begin{align}
     \ket{[\mathcal{C}'],S}_i = \rho((\mathcal{C}',0))\ket{[\mathds{1}],S'}_i,
 \end{align}
 where $S' = (\mathcal{C}')^t S \mathcal{C}' \in \text{Sym}_{D,i}$. Note that $\mathcal{C}'$ maps the kernel of the matrix $S'$ to the kernel of $S$: 
 \begin{align}
     \mathcal{C}'\  \text{ker}(S') = \text{ker}(S).
     \label{coker}
 \end{align}
 We know that the kernel of $S$ is composed of the column vectors of dimension $D$ with zeros in their last $i_0$ entries. The condition $S' \in \text{Sym}_{D,i}$ also guarantees that the kernel of $S'$ contains the column vectors of dimension $D$ with zeros in their last $i$ entries. These observations and equation \eqref{coker} are sufficient to show that
 \begin{align}
     (\mathcal{C}',0) x_i \cap x_0 \subseteq x_{i_0}\cap x_0.
 \end{align}
 Since $(\mathcal{C}',0) x_i \cap x_0$ corresponds to $\mathbb{V}$, i.e to the intersection with $x_0$ of the subspaces $x$ verifying $\bra{x}\ket{[\mathcal{C}'],S}_i \neq 0$, we find
 \begin{align}
     \mathbb{V} \subseteq \mathbb{V}_0.
     \label{inclu}
 \end{align}
For modules $W(S)$ with $S  \notin \text{Sym}_{D, i_0}$, one can use the transformation $(\mathcal{C},0)$ of lemma \ref{lemmod} to recover a module for which the assumption is valid. Applying $(\mathcal{C},0) \in P$ to a basis vector of $W(S)$ changes its intersections $\mathbb{V}_0$ and $\mathbb{V}$ to $(\mathcal{C},0) \mathbb{V}_0$ and $(\mathcal{C},0) \mathbb{V}$. Since this transformation is invertible, the inclusion in \eqref{inclu} is preserved.
\end{proof}
\noindent We are now ready to give a proof of proposition \ref{propopo}.
\begin{proof}{(Proposition \ref{propopo})}
 From lemma \ref{masterlem}, we see that $\phi$ maps the vectors $\ket{[\mathcal{C}],S}_i$ to vectors associated to subspaces of $\mathbb{V}_0$. In other words,
 \begin{align}
     \phi(\ket{[\mathcal{C}],S}_i) \in \{\ket{\mathbb{V}} : \mathbb{V} \subseteq \mathbb{V}_0\}.
 \end{align}
 It remains to show that for any $\mathbb{V} \subseteq \mathbb{V}_0$, there exists a vector $\ket{[\mathcal{C}],S}_i$ constructed from vectors associated to subspaces having an intersection $\mathbb{V}$ with $x_0$. If $\ket{[\mathcal{C}_0],S}_{i_0}$ is the basis vector of the one-dimensional $W_{i_0}(S)$, we have
\begin{align}
    \mathbb{V}_0 = (\mathcal{C}_0,0) x_{i_0}\cap x_0,
\end{align}
with $\mathcal{C}_0^t S \mathcal{C}_0 \in \text{Sym}_{D,i_0}$. Note that $\text{dim}(\mathbb{V}_0) = D-i_0 = \text{dim}(\text{ker}(S))$. We can write any $\mathbb{V} \subseteq \mathbb{V}_0$ of dimension $D-i$ as
\begin{align}
    \mathbb{V} = (\mathcal{C}_0,0)\cdot (\mathcal{C},0) x_{i}\cap x_0,
\end{align}
with $(\mathcal{C},0)x_{i_0} = x_{i_0}$. In particular, we can choose a matrix $\mathcal{C}$ of the form
 
\begin{align}
    \mathcal{C} = 
    \begin{pmatrix}
(\mathcal{C})_{1 \leq m,n\leq D-i_0} &  0\\
0 & \mathds{1}_{i_0 \times i_0}
\end{pmatrix},
\end{align}
so that $\mathcal{C}^t \mathcal{C}_0^t S \mathcal{C}_0\mathcal{C} = \mathcal{C}_0^t S \mathcal{C}_0$. We thus have
\begin{align}
    \ket{[\mathcal{C}_0\mathcal{C}], S}_i \in W(S)
\end{align}
and by construction
\begin{align}
    \phi(\ket{[\mathcal{C}_0\mathcal{C}], S}_i) = \ket{ \mathbb{V}}.
\end{align}
\end{proof}
The correspondence between the vectors $\ket{[\mathcal{C}],S}_i$ and the vectors $\ket{\mathbb{V}}$ provided by $\phi$ indicates the existence of a connection between the symplectic dual polar graphs and the \textit{subspace lattices}. In the next section, we present the latter and its relation with $U_{\sqrt{q}}(sl_2)$.

\section{Subspace lattices $L_N(q)$ and $U_{\sqrt{q}}(sl_2)$}
\label{s6}
In this section, we review features of the subspace lattice $L_N(q)$ \cite{TERWILLIGER2003463, watanabe2017algebra, terwilliger1990incidence}. This graph has for vertices $X$ the set of subspaces of $\mathbb{F}_q^{N}$. Two vertices (or subspaces) $\mathbb{V}$ and $\mathbb{V}'$ are connected by an edge when $\mathbb{V}$ covers $\mathbb{V}'$, i.e.
\begin{align}
    \mathbb{V}' \subset \mathbb{V} \quad \text{and} \quad |\text{dim}(\mathbb{V}) - \text{dim}(\mathbb{V}')| = 1,
\end{align}
or $\mathbb{V}'$ covers $\mathbb{V}$. As we saw in the previous section, the standard module $V_{L_N(q)}$ of this graph is defined as
\begin{align}
    V_{L_N(q)} = \text{span}_{\mathbb{C}}\{\ket{\mathbb{V}} : \mathbb{V} \subseteq \mathbb{F}_q^N\},
\end{align}
with $\ket{\mathbb{V}}$ the characteristic vectors associated to the vertices of $L_N(q)$. The adjacency matrix is the sum of a lowering matrix $L$ and a raising matrix $R$ acting on $V_{L_N(q)}$, whose entries in the basis $\{\ket{\mathbb{V}}: \mathbb{V} \subseteq \mathbb{F}_q^N\}$ are
\begin{align}
\bra{\mathbb{V}}L\ket{\mathbb{V}'} = 
\left\{
    \begin{array}{ll}
    	1  & \mbox{if } \mathbb{V} \subset \mathbb{V}', \ |\text{dim}(\mathbb{V}) - \text{dim}(\mathbb{V}')| = 1 \\
    	0 & \mbox{otherwise, }
    \end{array}
\right.
\label{L}
\end{align}
\begin{align}
\bra{\mathbb{V}}R\ket{\mathbb{V}'} = 
\left\{
    \begin{array}{ll}
    	1  & \mbox{if } \mathbb{V}' \subset \mathbb{V}, \ |\text{dim}(\mathbb{V}) - \text{dim}(\mathbb{V}')| = 1 \\
    	0 & \mbox{otherwise. }
    \end{array}
\right.
\label{R}
\end{align}
\noindent This graph is not distance-regular. Nevertheless, one can define a matrix $K$ acting diagonally on $\{\ket{\mathbb{V}}: \mathbb{V} \subseteq \mathbb{F}_q^N\}$ and playing a role similar to a dual adjacency matrix. The non-zero entries of $K$ are
\begin{align}
    \bra{\mathbb{V}}K\ket{\mathbb{V}} = q^{\frac{N}{2} - \text{dim}(\mathbb{V})}.
\label{K}
\end{align}
\noindent In \cite{TERWILLIGER2003463}, it is shown that $\hat{L}' = q^{\frac{1-N}{2}}L$, $\hat{R}' = R$ and $K$ give a representation of $U_{\sqrt{q}}(sl_2)$:
\begin{align}
    K \hat{L}' = q \hat{L}'K, \quad K\hat{R}' = q^{-1}\hat{R}'K, \quad [\hat{L}',\hat{R}'] = \frac{K - K^{-1}}{q^{\frac{1}{2}} -q^{-\frac{1}{2}}}.
\end{align}
\noindent The same is true for $\hat{L} = q^{\frac{1-N}{4}}L$, $\hat{R} = q^{\frac{1-N}{4}}R$ and $K$, since they differ from  $\hat{L}'$, $\hat{R}'$ and $K$ by an algebra automorphism \cite{klimyk2012quantum}. This realization as the advantage of verifying
\begin{align}
    \hat{R}^t = \hat{L}.
\end{align}
The representation given by $\hat{L}$, $\hat{R}$ and $K$ is fully reducible \cite{klimyk2012quantum}.  For an irreducible submodule $\mathcal{I}$ of dimension $\text{dim}(\mathcal{I}) = \ell + 1$, there exists a basis $\{\ket{n}\}_{0 \leq n \leq \ell}$ such that
\begin{align}
    \hat{R} \ket{n} = \sqrt{[n+1]_{\sqrt{q}}[\ell - n]_{\sqrt{q}}}\ket{n+1},
    \label{ra}
\end{align}
\begin{align}
    \hat{L} \ket{n} = \sqrt{[n]_{\sqrt{q}}[\ell - n + 1]_{\sqrt{q}}}\ket{n-1},
    \label{la}
\end{align}
\begin{align}
    K \ket{n} = q^{\frac{\ell -2n}{2}}\ket{n},
    \label{ka}
\end{align}
where 
\begin{align}
    [n]_{\sqrt{q}} = \frac{q^{\frac{n}{2} } - q^{-\frac{n}{2} }}{q^{\frac{1}{2} } - q^{-\frac{1}{2} }}.
\end{align}
The number of irreducible submodules of dimension $\ell + 1$ in $V_{L_N(q)}$ was found in \cite{watanabe2017algebra}. Indeed, let $\mathbb{E}_i^*$ denote the projector onto the $i^{\text{th}}$ eigenspace of $K$, i.e. onto the vertices associated to the subspaces of dimension $i$:
\begin{align}
    \mathbb{E}_i^* = \sum_{\substack{\mathbb{V} \subseteq \mathbb{F}_q^N \\ \text{dim}(\mathbb{V}) = i}} \ket{\mathbb{V}}\bra{\mathbb{V}}.
\end{align}
\noindent Then, let the \textit{endpoint} $\nu$ of an irreducible representation $\mathcal{I}$ of $U_{\sqrt{q}}(sl_2)$ be the smallest integer such that $\mathbb{E}_{\nu}^* \mathcal{I} \neq 0$. It was found in \cite{watanabe2017algebra} that the number $\text{mult}({\nu},N)$ of irreducible $U_{\sqrt{q}}(sl_2)$-submodules in $V_{L_N(q)}$ having the endpoint $\nu$ is given by
\begin{align}
    \text{mult}({\nu},N) = 
    \left\{
    \begin{array}{ll}
        1 & \mbox{if } \nu = 0, \\
    	 \binom{N}{\nu}_q - \binom{N}{\nu - 1}_q  & \mbox{if } 0 < \nu \leq N/2, \\
    	0 & \mbox{otherwise, }
    \end{array}
\right.
    \label{dego}
\end{align}
where
\begin{align}
    \binom{N}{\nu}_q = \frac{(q^N - 1)(q^{N-1} - 1) \dots (q^{N-\nu + 1} - 1)}{(q^\nu - 1) (q^{\nu - 1} - 1) \dots (q - 1)},
\end{align}
\noindent and that $\nu$ is related to the dimension $\ell + 1$ of $\mathcal{I}$ by
\begin{align}
    \ell = N - 2\nu.
\end{align}
Given proposition \ref{propopo}, the decomposition of $V_{L_N(q)}$ in submodules $\mathcal{I}$ also yields a decomposition of $W(S)$. In the next section, we shall show that it corresponds to its decomposition in irreducible $\mathcal{T}$-submodules.

\section{The irreducible $\mathcal{T}$-submodules}
\label{s7}
In this section, we first identify the irreducible $\mathcal{T}$-submodules in $W(S)$ and provide their multiplicities. These new results are then related to the characterization of the submodules in terms of the endpoints and the diameter that was used in \cite{WORAWANNOTAI2013443}.

\subsection{The irreducible decomposition of $W(S)$}
The standard module $V$ of a symplectic dual polar graph is both a $\mathcal{T}$- and a $P$-module. Recall that for any $g \in P$, we have
\begin{align}
    [A, \rho(g)]  = [A^*, \rho(g)] = 0.
\end{align}
Therefore, one finds that
\begin{align}
    A W(S) \subseteq W(S) \quad \text{and} \quad A^* W(S) \subseteq W(S).
\end{align}
In particular, the eigenspaces $W(S)$ of the $H$-action on $V$ are $\mathcal{T}$-submodules. While they are not irreducible, we have a theorem which describes the restriction of $A$ and $A^*$ to $W(S)$. Let us define the \textit{type} $\epsilon$ of $S$ as
 \begin{align}
    \epsilon  =
    \left\{
    \begin{array}{ll}
    	1  & \mbox{if } \text{rank}(S) \text{ is $0$ \textbf{or} is even and  }(-1)^{\frac{\text{rank}(S)}{2}} \text{det}({Q}) \text{ is a square in }\mathbb{F}_q,\\
    	-1  & \mbox{if } \text{rank}(S) \text{ is even and }(-1)^{\frac{\text{rank}(S)}{2}} \text{det}({Q}) \text{ is a non-square in }\mathbb{F}_q, \\
    	0  & \mbox{if }\text{rank}(S)\text{ is odd,}
    \end{array}
\right.
\label{yo}
\end{align}
where $Q$ is a non-singular $\text{rank}(S) \times \text{rank}(S)$ matrix with entries in $\mathbb{F}_q$ verifying
 \begin{align}
    \Upsilon^t {S} \Upsilon = 
 \begin{pmatrix}
0 &  0\\
0 & Q
\end{pmatrix},
\end{align}
for some matrix $\Upsilon \in GL(D,q)$. We have
\begin{theorem}
Let $A$ and $A^*$ be the adjacency and dual adjacency matrix of a dual polar graph of type $[C_D(q)]$ with $q$ prime. Let $W(S)$ be an eigenspace of the abelian automorphism subgroup $H$ defined by \eqref{definitionH}. Then,
\begin{align}
    A|_{W(S)} = \epsilon q^{\frac{D}{2}}\mathcal{K}  - 1  + q^{D/2}\mathcal{K}^{1/2}(q^{-1/4}\hat{\mathcal{L}} + q^{1/4}\hat{\mathcal{R}} )
    \label{bigrez}
\end{align}
and
\begin{align}
    A^*|_{W(S)} = -\frac{q(q^{D-1}+1)}{q-1} + \frac{q(q^{D-1}+1)(q^D + 1)}{2(q-1)} q^{-D+N/2}\mathcal{K}^{-1},
    \label{medrez}
\end{align}
where $N = \text{dim}(\text{ker}(S))$, $\epsilon \in  \{-1,0,+1\}$  is the type of $S$ and 
\begin{align}
    \mathcal{K} =\phi^{-1} \circ K\circ \phi, \quad \mathcal{\hat{L}} = \phi^{-1} \circ\hat{L}\circ \phi \quad \text{and} \quad \mathcal{\hat{R}} = \phi^{-1} \circ\hat{R}\circ \phi
\end{align}
are up to an isomorphism $\phi$ defined by \eqref{defphi} the matrices given by \eqref{L}-\eqref{K} in the adjacency algebra of the subspace lattice $L_N(q)$.
\label{toto}
\end{theorem}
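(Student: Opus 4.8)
The plan is to treat the two displayed identities separately, the formula \eqref{medrez} for $A^*$ being the more elementary of the two. Since the scheme is $Q$-polynomial, $A^* = A_1^*$ is diagonal in the basis $\{\ket{x}\}$ with $(A^*)_{xx} = |X|(E_1)_{x_0 x}$ depending only on $\mathrm{dist}(x,x_0)$; hence $A^*$ acts as a scalar $\theta_i^*$ on each neighbourhood space $V_i$ and a fortiori on $W_i(S) \subseteq V_i$. First I would record the dual eigenvalues $\theta_i^*$ of the dual $q$-Krawtchouk scheme, which are affine in $q^{-i}$, and then observe that under the isomorphism $\phi$ of \eqref{defphi} the space $W_i(S)$ corresponds to the dimension-$(D-i)$ subspaces of $\mathbb{V}_0$, on which \eqref{K} gives $\mathcal{K} = q^{N/2-D+i}$, so that $q^{-D+N/2}\mathcal{K}^{-1}$ equals exactly $q^{-i}$ on $W_i(S)$. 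Matching the constant term and the coefficient of $q^{-i}$ in $\theta_i^*$ then yields \eqref{medrez}.

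For the adjacency matrix I would first reduce to a convenient representative of the congruence class of $S$. By lemma \ref{lemmod} there is a $\mathcal{C} \in GL(D,q)$ with $\rho((\mathcal{C},0))W(S) = W(S')$ where $S' \in \text{Sym}_{D,i_0}$ has a non-degenerate bottom-right $i_0 \times i_0$ block; since $(\mathcal{C},0) \in P$ commutes with both $A$ and $A^*$, while $N = \dim\ker S$ and the type $\epsilon$ of \eqref{yo} are invariant under the congruence $S \mapsto \mathcal{C}^t S \mathcal{C}$, it suffices to establish \eqref{bigrez} for such an $S'$. I would then split $A = A^{(-)} + A^{(0)} + A^{(+)}$ according to whether a neighbour of a vertex is closer to, at the same distance from, or farther from $x_0$; each summand preserves $W(S)$ because $A$ does and the distance grading by $i$ is $P$-equivariant.

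The off-diagonal blocks $A^{(\pm)}$ should be identified with multiples of the lowering and raising operators \eqref{L}-\eqref{R} of $L_N(q)$. Concretely, I would evaluate $A$ on a basis vector $\ket{[\mathcal{C}],S}_i$, expand it through \eqref{eigenH}-\eqref{eigenH2} as a character-weighted sum over $\text{Sym}_{D,i}$, sort the resulting neighbours by their distance to $x_0$, and apply $\phi$. Lemma \ref{masterlem} guarantees that the distance-raising part lands in subspaces covered by $\mathbb{V}$ and the distance-lowering part in subspaces covering $\mathbb{V}$, so that after $\phi$ the blocks are supported exactly where $L$ and $R$ are; a count of the relevant isotropic neighbours then fixes the scalars, which a short bookkeeping with the powers of $\mathcal{K}^{1/2}$ shows to be $q^{(i+1)/2}L$ and $q^{i/2}R$ on $W_i(S)$, i.e. precisely the terms $q^{D/2}\mathcal{K}^{1/2}(q^{-1/4}\hat{\mathcal{L}} + q^{1/4}\hat{\mathcal{R}})$ once one substitutes $\hat{L} = q^{(1-N)/4}L$ and $\hat{R} = q^{(1-N)/4}R$.

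The delicate part, and the one I expect to be the main obstacle, is the diagonal block $A^{(0)}$: I must show it is scalar on $W_i(S)$ and that the scalar is $\epsilon q^{i-(D-N)/2}-1$, with the type $\epsilon$ of \eqref{yo} emerging. Here the character sum defining $\ket{[\mathcal{C}],S}_i$ is essential. Counting the neighbours of $(\mathds{1},\mathcal{F})x_i$ that remain at distance $i$ from $x_0$ and then summing against $\chi_S$ produces a quadratic Gauss sum $\sum_{t}e^{2\pi i\, Q(t)/q}$ governed by the non-degenerate block $Q$ of $S$; the standard evaluation of such sums returns $0$ when $\mathrm{rank}(S)$ is odd and $\pm q^{(\text{power})}$ with sign determined by whether $(-1)^{\mathrm{rank}(S)/2}\det(Q)$ is a square, which is exactly the trichotomy of \eqref{yo}. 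The same character-sum mechanism should force the cross terms between distinct dimension-$(D-i)$ subspaces to cancel, so that $A^{(0)}$ is diagonal in the $\phi$-basis, consistent with $L_N(q)$ having no edges within a fixed dimension. Assembling $A^{(0)} = \epsilon q^{D/2}\mathcal{K} - 1$ with the off-diagonal blocks gives \eqref{bigrez}; as a consistency check, the case $S=0$ (so $N=D$ and $\epsilon=1$) returns $A^{(0)} = q^i - 1 = a_i$, the genuine intersection number.
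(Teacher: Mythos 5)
Your outline reproduces the architecture of the paper's own proof (Appendix A): work in the basis $\ket{[\mathcal{C}],S}_i$, use $P$-equivariance to reduce entries to representatives with $\mathcal{C}=\mathds{1}$, note that $E_m^*AE_n^*=0$ for $|m-n|>1$, evaluate the cover-relation blocks and the diagonal block by character sums over $\text{Sym}_{D,i}$, and let the count of zeros of the quadratic form attached to the non-degenerate block of $S$ produce the type $\epsilon$ of \eqref{yo}. Your scalars ($q^{(i+1)/2}$ and $q^{i/2}$ on $W_i(S)$, diagonal value $\epsilon q^{i-(D-N)/2}-1$) and your treatment of $A^*$ agree with the paper. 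There is, however, one genuine gap.

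It is the step you pass over with ``the same character-sum mechanism should force the cross terms between distinct dimension-$(D-i)$ subspaces to cancel''. This is the paper's case (II), and cancellation is not the mechanism --- nor can it be. Take $S=0$: then $\chi_0$ is constant, so $\ket{[\mathcal{C}],0}_i$ is a uniform superposition with positive coefficients, and ${}_i\bra{[\mathcal{C}],0}A\ket{[\mathcal{C}'],0}_i$ is a positive constant times the number of edges joining the two $H$-orbits; it vanishes only if there are no such edges at all. What is needed is a geometric statement: if $x\neq y$ satisfy $\dim(x\cap x_0)=\dim(y\cap x_0)=D-i$ and $\dim(x\cap y\cap x_0)=D-i-1$, then $\text{dist}(x,y)\geq 2$. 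The paper proves this using the symplectic structure: if $x$ and $y$ were adjacent, the vectors $u$ and $u'$ completing $x\cap y$ to $x$ and to $y$ respectively would both lie in $x_0$, hence $\mathfrak{B}(u,u')=0$, and $x\oplus\text{span}_{\mathbb{F}_q}\{u'\}$ would be an isotropic subspace of dimension $D+1$, contradicting maximality of the vertices. This is the one place where dual-polar geometry, rather than character theory, carries the argument; without it you have not shown that your $A^{(0)}$ is diagonal in the $\phi$-basis, so \eqref{bigrez} is unproved. A smaller repair: lemma \ref{masterlem}, which you invoke for the supports of $A^{(\pm)}$, only nests all intersections inside the top space $\mathbb{V}_0$ and says nothing about pairs at adjacent levels; the fact you actually need --- that adjacency of $x$ at level $i$ and $y$ at level $i+1$ forces $y\cap x_0\subset x\cap x_0$ --- follows instead from the dimension count $\dim(x\cap y)=D-1\Rightarrow\dim(x\cap y\cap x_0)\geq D-i-1$, whence $y\cap x_0=x\cap y\cap x_0\subset x\cap x_0$, which is how the paper isolates its four cases.
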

\begin{proof}
 It is obtained by the computation of the entries
 \begin{align}
     {}_i\bra{[\mathcal{C}],S} A \ket{[\mathcal{C}'],S}_j  \quad \text{and} \quad  {}_i\bra{[\mathcal{C}],S} A^* \ket{[\mathcal{C}'],S}_j,
 \end{align}
 which is presented in Appendix A. 
\end{proof}
This theorem clarifies the relation between the dual polar graphs and the subspace lattices, both considered as $q$-analogues of hypercubes. Indeed, equation \eqref{bigrez} expresses \textit{the restriction to $W(S)$ of the adjacency matrix of a symplectic dual polar graph as the adjacency matrix of a subspace lattice with loops and weights on its edges}. Furthermore, it yields the irreducible $\mathcal{T}$-submodules we were looking for. 

\begin{corollary}
The irreducible $\mathcal{T}$-submodules in $W(S)$ are given by $\phi^{-1}(\mathcal{I})$, with $\mathcal{I}$ an irreducible $U_{\sqrt{q}}(sl_2)$-submodule of $V_{L_N(q)}$.
\end{corollary}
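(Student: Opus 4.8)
The plan is to upgrade the vector-space isomorphism $\phi\colon W(S)\to V_{L_N(q)}$ of Proposition \ref{propopo} to an isomorphism of modules, by showing that conjugation by $\phi$ carries the restricted Terwilliger algebra $\mathcal{T}|_{W(S)}=\langle A|_{W(S)},A^*|_{W(S)}\rangle$ onto the algebra $\mathcal{U}=\langle K,\hat{L},\hat{R}\rangle$ generated on $V_{L_N(q)}$ by the representation of $U_{\sqrt q}(sl_2)$. Once this equality of algebras is in hand, the lattice of $\mathcal{T}$-invariant subspaces of $W(S)$ is carried bijectively by $\phi$ to the lattice of $U_{\sqrt q}(sl_2)$-invariant subspaces of $V_{L_N(q)}$, and minimal nonzero invariant subspaces correspond under this bijection. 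Hence every irreducible $\mathcal{T}$-submodule of $W(S)$ has the form $\phi^{-1}(\mathcal{I})$ for an irreducible $U_{\sqrt q}(sl_2)$-submodule $\mathcal{I}$, and conversely. I note that the identity lies in $\mathcal{T}$ (it is $A_0$) and restricts to the identity on $W(S)$, so scalars are freely available on both sides.

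For the inclusion $\phi\,\mathcal{T}|_{W(S)}\,\phi^{-1}\subseteq\mathcal{U}$, I would invoke Theorem \ref{toto}: the conjugates $\phi A|_{W(S)}\phi^{-1}$ and $\phi A^*|_{W(S)}\phi^{-1}$ are exactly the right-hand sides of \eqref{bigrez} and \eqref{medrez}, which are expressed through $K$, $K^{1/2}$, $K^{-1}$, $\hat{L}$ and $\hat{R}$. Since $K$ acts diagonally on the finite-dimensional space $V_{L_N(q)}$ with the positive eigenvalues $q^{(\ell-2n)/2}$ of \eqref{ka}, both $K^{1/2}$ and $K^{-1}$ are polynomials in $K$ and therefore already lie in $\mathcal{U}$; the two generators thus lie in $\mathcal{U}$, giving the inclusion.

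The substantive direction is the reverse inclusion, where I must recover $K$, $\hat{L}$ and $\hat{R}$ individually. From \eqref{medrez}, $\phi A^*|_{W(S)}\phi^{-1}$ is an affine function of $K^{-1}$ with a nonzero coefficient (as $q>1$), so $K^{-1}$, and then $K$, $K^{1/2}$, $K^{-1/2}$ by functional calculus, belong to the conjugated algebra. Subtracting $\epsilon q^{D/2}K-1$ from \eqref{bigrez} and multiplying by $q^{-D/2}K^{-1/2}$ then isolates $M:=q^{-1/4}\hat{L}+q^{1/4}\hat{R}$. The key step is to disentangle the lowering and raising parts: using the $U_{\sqrt q}(sl_2)$ relations $K\hat{L}K^{-1}=q\hat{L}$ and $K\hat{R}K^{-1}=q^{-1}\hat{R}$, I compute $KMK^{-1}=q^{3/4}\hat{L}+q^{-3/4}\hat{R}$, which together with $M$ forms a $2\times2$ linear system in $\hat{L},\hat{R}$ whose determinant $q^{-1}-q$ is nonzero for $q\neq1$. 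Solving it exhibits $\hat{L}$ and $\hat{R}$ inside the conjugated algebra, so that $\mathcal{U}\subseteq\phi\,\mathcal{T}|_{W(S)}\,\phi^{-1}$ and the two algebras coincide.

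The main obstacle is precisely this separation of $\hat{L}$ and $\hat{R}$ from the single combination furnished by the adjacency matrix; it is resolved by exploiting the $q$-weight grading of $\hat{L},\hat{R}$ under $\operatorname{ad}K$, which is where the $U_{\sqrt q}(sl_2)$ relations and the condition $q\neq1$ enter essentially. The only other point requiring care is the functional-calculus claim that $K^{\pm1/2}$ and $K^{-1}$ are polynomials in $K$, which is routine given the explicit diagonal action \eqref{ka}.
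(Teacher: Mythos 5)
Your proof is correct, and it follows a genuinely different route from the paper's. The paper's proof of this corollary is a one-line verification of a single direction: using Theorem \ref{toto} together with the explicit formulas \eqref{ra}, \eqref{la} and \eqref{ka}, it computes the action of $A|_{W(S)}$ and $A^*|_{W(S)}$ on the basis $\{\phi^{-1}(\ket{n})\}_{0 \leq n \leq \ell}$ of $\phi^{-1}(\mathcal{I})$ --- an irreducible tridiagonal action paired with a diagonal action having distinct eigenvalues --- and deduces from this that each $\phi^{-1}(\mathcal{I})$ is irreducible as a $\mathcal{T}$-module. You instead show that conjugation by $\phi$ carries the restricted algebra $\mathcal{T}|_{W(S)}$ \emph{onto} the image algebra $\langle K, \hat{L}, \hat{R}\rangle$ of $U_{\sqrt{q}}(sl_2)$; the step with real content, absent from the paper, is the recovery of $\hat{L}$ and $\hat{R}$ individually from the combination $q^{-1/4}\hat{L} + q^{1/4}\hat{R}$ by conjugating with $K$ and solving a $2\times 2$ system with determinant $q^{-1} - q \neq 0$, together with the (correct) functional-calculus observation that $K^{\pm 1/2}$ and $K^{-1}$ are polynomials in $K$ on a finite-dimensional space with positive spectrum. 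Both arguments rest entirely on Theorem \ref{toto}, but they buy different things. Your algebra-equality argument yields the two-way correspondence of invariant-subspace lattices, and in particular makes explicit the converse direction --- that \emph{every} irreducible $\mathcal{T}$-submodule of $W(S)$ equals $\phi^{-1}(\mathcal{I})$ for some irreducible $U_{\sqrt{q}}(sl_2)$-submodule $\mathcal{I}$, including the ``diagonal'' copies sitting inside isotypic components --- which the paper's stated proof leaves implicit. The paper's computation, on the other hand, is shorter and directly exhibits the tridiagonal-plus-diagonal (Leonard pair) structure on each submodule, which is precisely what section \ref{s7} reuses afterwards to extract the dual $q$-Krawtchouk parameters $(r,t,d)$.
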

\begin{proof}
 Using \eqref{ra}, \eqref{la} and \eqref{ka}, it is easy to find the action of $A|_{W(S)}$ and $A^*|_{W(S)}$ on $\phi^{-1}(\mathcal{I})$ and deduce its irreducibility as a $\mathcal{T}$-submodule.
\end{proof}

 Let us note that $\phi^{-1}(\mathcal{I})$ and $\phi^{-1}(\mathcal{I}') $ are isomorphic as submodules if and only if they share the same dimension $\ell + 1$ and are associated to matrices $S \in \text{Sym}_{D}$ having the same type $\epsilon$ and the same $\text{rank}(S) = D - N$, with $N = \text{dim}(\text{ker}(S))$. The classes of isomorphic submodules can thus be parameterized by the triplet $(\ell,\text{rank}(S), \epsilon)$ and we find the following result concerning the multiplicities:
 \begin{theorem}
 The number $M_{\ell, \text{rank}(S), \epsilon}$ of isomorphic $\mathcal{T}$-submodules of class $(\ell, \text{rank}(S), \epsilon)$ is
 \begin{align}
      M_{\ell, 0, 1} = \text{mult}({\frac{D - \ell}{2}},D),
     \label{odde1}
 \end{align}
 \begin{align}
     M_{\ell, 2n + 1, 0} = \text{mult}({\frac{D - 2n  -1 - \ell}{2}},D - {2n - 1})\prod_{k=1}^n \frac{q^{2k}}{q^{2k} - 1} \prod_{k=0}^{2n} (q^{D-k} - 1),
     \label{odde}
 \end{align}
  \begin{align}
     M_{\ell, 2n , \pm 1} =  
    	\text{mult}({\frac{D - 2n - \ell}{2}},D-{2n}) \frac{q^n \pm 1}{2q^n} \prod_{k=1}^n \frac{q^{2k}}{q^{2k} - 1} \prod_{k=0}^{2n-1} (q^{D-k} - 1),
\label{evene1}
 \end{align}
  with $\text{mult}(\nu,N)$ as defined in equation \eqref{dego}.
  \label{tototo}
 \end{theorem}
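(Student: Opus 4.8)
The plan is to convert the statement into a count of symmetric matrices over $\mathbb{F}_q$ of prescribed rank and type, and then to evaluate that count with the classification of non-degenerate symmetric bilinear forms over a finite field.

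First I would globalize the decomposition $V = \bigoplus_{S \in \text{Sym}_D} W(S)$. For each $S$, Proposition \ref{propopo} gives an isomorphism $W(S) \cong V_{L_N(q)}$ with $N = \text{dim}(\text{ker}(S)) = D - \text{rank}(S)$, and the corollary identifies the irreducible $\mathcal{T}$-submodules of $W(S)$ with the images $\phi^{-1}(\mathcal{I})$ of the irreducible $U_{\sqrt{q}}(sl_2)$-submodules $\mathcal{I} \subseteq V_{L_N(q)}$. By the remark preceding the theorem, two such submodules are isomorphic precisely when they have the same dimension $\ell+1$ and their matrices share the same rank and type $\epsilon$. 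Since every $S$ of a fixed rank $r$ produces the same $N = D - r$, and since by \eqref{dego} the number of $(\ell+1)$-dimensional irreducibles in $V_{L_N(q)}$ equals $\text{mult}\!\big(\tfrac{N-\ell}{2}, N\big)$, I would obtain
\[
M_{\ell, r, \epsilon} = \mathcal{N}(r,\epsilon)\,\text{mult}\!\Big(\tfrac{D-r-\ell}{2},\, D-r\Big),
\]
where $\mathcal{N}(r,\epsilon)$ is the number of $S \in \text{Sym}_D$ of rank $r$ and type $\epsilon$. The arguments $\tfrac{D-r-\ell}{2}$ and $D-r$ already match those appearing in \eqref{odde1}, \eqref{odde} and \eqref{evene1}, so it remains only to show that $\mathcal{N}(r,\epsilon)$ equals the corresponding product of $q$-factors.

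Next I would compute $\mathcal{N}(r,\epsilon)$ by factoring a rank-$r$ symmetric form through its radical: the assignment $B \mapsto (\text{rad}(B),\bar B)$ is a bijection onto the pairs consisting of a $(D-r)$-dimensional subspace $W$ and a non-degenerate form $\bar B$ on $\mathbb{F}_q^{D}/W$, and $\bar B$ is exactly a representative of the block $Q$ of \eqref{yo}, so its discriminant class is the type $\epsilon$. This yields
\[
\mathcal{N}(r,\epsilon) = \binom{D}{r}_q\, \nu_r(\epsilon),
\]
with $\binom{D}{r}_q$ counting the radicals and $\nu_r(\epsilon)$ the number of non-degenerate symmetric $r\times r$ forms of type $\epsilon$. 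I would then evaluate $\nu_r(\epsilon)$ by orbit--stabilizer for the congruence action of $GL_r(q)$, whose point stabilizers are the orthogonal groups. For $r=2n$ the two discriminant classes are single orbits with stabilizers $O^{\epsilon}_{2n}(q)$ of order $2q^{n(n-1)}(q^n-\epsilon)\prod_{i=1}^{n-1}(q^{2i}-1)$, giving $\nu_{2n}(\epsilon)=|GL_{2n}(q)|/|O^{\epsilon}_{2n}(q)|$; for $r=2n+1$ both discriminant classes have isomorphic stabilizer $O_{2n+1}(q)$ of order $2q^{n^2}\prod_{i=1}^{n}(q^{2i}-1)$ and are lumped into type $0$, giving $\nu_{2n+1}(0)=2\,|GL_{2n+1}(q)|/|O_{2n+1}(q)|$. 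With $|GL_r(q)| = q^{\binom{r}{2}}\prod_{i=1}^{r}(q^i-1)$ these simplify to
\[
\nu_{2n+1}(0) = q^{n(n+1)}\prod_{j=0}^{n}(q^{2j+1}-1), \qquad \nu_{2n}(\epsilon) = \tfrac{1}{2}q^{n^2}(q^n+\epsilon)\prod_{j=1}^{n}(q^{2j-1}-1).
\]
Multiplying by $\binom{D}{r}_q$ and cancelling the odd and even parts of $\prod_{i=1}^{r}(q^i-1)$ against the denominators reproduces the products in the three stated formulas, the rank-$0$ case being the degenerate instance $\mathcal{N}(0,1)=1$ matching \eqref{odde1}.

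The conceptual reduction of the first paragraph is immediate from the isomorphism $\phi$ and the isomorphism criterion, so the real work lies entirely in the form count. The main obstacle will be the bookkeeping that connects the paper's type $\epsilon$ (defined through whether $(-1)^{\text{rank}(S)/2}\text{det}(Q)$ is a square) to the geometric $+/-$ Witt type and hence to the correct orthogonal group $O^{\pm}_{2n}(q)$, together with the routine but delicate $q$-algebra needed to match the stabilizer-quotient expressions to the exact products $\prod_{k=1}^n \tfrac{q^{2k}}{q^{2k}-1}$ and $\prod_{k}(q^{D-k}-1)$ in \eqref{odde} and \eqref{evene1}. Throughout one works with $q$ an odd prime, so that squares and non-squares, and the whole classification of symmetric forms, behave as required.
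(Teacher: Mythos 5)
Your proposal is correct, and its skeleton coincides with the paper's: the paper likewise writes $M_{\ell, D-N, \epsilon}$ as the product of the number of matrices $S \in \text{Sym}_D$ of the given rank and type with the number $\text{mult}\left(\frac{N-\ell}{2}, N\right)$ of irreducible $U_{\sqrt{q}}(sl_2)$-submodules of dimension $\ell+1$ in $V_{L_N(q)}$. The difference lies entirely in how the matrix count is evaluated. The paper simply quotes MacWilliams' formulas $\mathcal{V}_\pm(D,r)$ for the number of symmetric matrices whose non-degenerate block $Q$ has square (resp.\ non-square) determinant --- formulas that carry a case split on whether $-1$ is a square in $\mathbb{F}_q$ --- and then converts them into the type $\epsilon$ of \eqref{yo}. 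You instead derive the count: the radical factorization gives $\mathcal{N}(r,\epsilon)=\binom{D}{r}_q\,\nu_r(\epsilon)$, and orbit--stabilizer for the congruence action of $GL_r(q)$ (exactly two orbits for $q$ odd, with stabilizers the orthogonal groups) gives $\nu_r(\epsilon)$. Your closed forms check out: $\nu_{2n}(\epsilon)=\tfrac{1}{2}q^{n^2}(q^n+\epsilon)\prod_{j=1}^n(q^{2j-1}-1)$, using the cancellation $(q^{2n}-1)/(q^n-\epsilon)=q^n+\epsilon$, and $\nu_{2n+1}(0)=q^{n(n+1)}\prod_{j=0}^n(q^{2j+1}-1)$; multiplying by $\binom{D}{r}_q$ reproduces exactly the products $\prod_{k=1}^n \frac{q^{2k}}{q^{2k}-1}\prod_k(q^{D-k}-1)$ in \eqref{odde} and \eqref{evene1}, and $\mathcal{N}(0,1)=1$ gives \eqref{odde1}. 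What your route buys: it is self-contained modulo textbook inputs (the classification of non-degenerate symmetric forms over $\mathbb{F}_q$ for odd $q$, and the orders of $O^{\pm}_{2n}(q)$ and $O_{2n+1}(q)$), and by working directly with the Witt type it bypasses the paper's dichotomy on $-1$ being a square; the one point requiring care, which you correctly flag, is the identification of the paper's $\epsilon=+1$ (i.e.\ $(-1)^n\det Q$ a square) with the split type --- the hyperbolic form of rank $2n$ has discriminant $(-1)^n$, so its stabilizer is $O^{+}_{2n}(q)$ with the $(q^n-1)$ factor. What the paper's route buys is brevity, at the price of outsourcing the count to \cite{macwilliams1969orthogonal}. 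Your explicit restriction to odd $q$ is also the honest hypothesis here: the paper states the theorem for $q$ prime without comment, even though \eqref{yo} and the counting formulas it invokes degenerate at $q=2$.
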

 \begin{proof}
 The multiplicity of the class $(\ell, D-N,\epsilon)$ is the product of the number of symmetric $D\times D$ matrices $S$ of rank $D-N$ and type $\epsilon$ and the number $\text{mult}({(N-\ell)/{2}},N)$ of irreducible $U_{\sqrt{q}}(sl_2)$-submodules $\mathcal{I}$ of dimension $\ell + 1$ in $V_{L_N(q)}$. The latter was given in section \ref{s6}. To identify the former, we recall equation \eqref{yo} which gives the type $\epsilon$ of $S$ in terms of $\text{rank}(S)$ and $\text{det}(Q)$. We are thus interested in the numbers $\mathcal{V}_+(D, D-N)$ (resp. $\mathcal{V}_-(D, D-N)$) of symmetric matrices of dimension $D$, of rank $D-N$ and for which the non-degenerate part $Q$ has a square (resp. a non-square) determinant. These numbers have been investigated in \cite{macwilliams1969orthogonal} and it was found that
\begin{align}
   \mathcal{V}_\pm(D, 2n+1) =  \frac{1}{2}\prod_{k=1}^n \frac{q^{2k}}{q^{2k} - 1} \prod_{k=0}^{2n} (q^{D-k} - 1)
\end{align}
\begin{align}\arraycolsep=1.4pt\def\arraystretch{1.8}
   \mathcal{V}_\pm(D, 2n) =  
      \left\{
    \begin{array}{ll}
     \frac{q^n \pm 1}{2q^n} \prod_{k=1}^n \frac{q^{2k}}{q^{2k} - 1} \prod_{k=0}^{2n-1} (q^{D-k} - 1)  & \mbox{if } -1 \text{ is a square in }\mathbb{F}_q \\
    	 \frac{q^n \pm (-1)^{{n}}}{2q^n} \prod_{k=1}^n \frac{q^{2k}}{q^{2k} - 1} \prod_{k=0}^{2n-1} (q^{D-k} - 1)  & \mbox{otherwise. }
    \end{array}
\right.
\end{align}
A direct application of these equations and of \eqref{yo} yields \eqref{odde} and \eqref{evene1}. To obtain \eqref{odde1}, we observe that $\text{rank}(S) = 0$ implies that $S = 0$ and $N = D$.
 \end{proof}

\subsection{Correspondence with the parameters $(r,t,d)$ }

In \cite{WORAWANNOTAI2013443}, the classes of isomorphic $\mathcal{T}$-submodules were parametrized by the triplet $(r,t,d)$. The endpoint $r$ is defined as the smallest integer such that $E_r^*\phi^{-1}(\mathcal{I}) \neq 0$. Similarly, the dual endpoint $t$ is defined as the smallest integer such that $E_t\phi^{-1}(\mathcal{I}) \neq 0$. As for $d$, it corresponds to the diameter of $\phi^{-1}(\mathcal{I})$, i.e. 
\begin{align}
    d = |\{ i \in \{0,1, \dots, D\} : E_i^* \phi^{-1}(\mathcal{I}) \neq 0 \}| - 1.
\end{align}
To find the correspondence between $(r,t,d)$ and $(\ell, \text{rank}(S), \epsilon)$, we use the known results \cite{WORAWANNOTAI2013443}:
\begin{align}
    A E_i = \theta_i E_i \quad \text{and} \quad A^* E^*_i = \theta^*_i E^*_i,
\end{align}
with,
\begin{align}
    \theta_i = \frac{1 - q + q^{D+1-i} - q^i}{q-1}
\end{align}
and
\begin{align}
    \theta_i^* = -\frac{q(q^{D-1}+1)}{q-1} + \frac{q(q^{D-1}+1)(q^D + 1))}{2(q-1)} q^{-i}.
\end{align}
One can use equations \eqref{medrez} and \eqref{ka} to check that $A^*$ acting on the basis $\{\phi^{-1}(\ket{n})\}_{0 \leq n \leq \ell }$ of $\phi^{-1}(\mathcal{I})$ yields
\begin{align}
    A^* \phi^{-1}(\ket{n}) = \theta_{D-n - \frac{N - \ell}{2}}\phi^{-1}(\ket{n}).
\end{align}
Therefore, we find for the submodule $\phi^{-1}(\mathcal{I})$ that
\begin{align}
    r = D - \frac{N + \ell}{2} \quad \text{and} \quad d = \ell.
    \label{corr1}
\end{align}
The same approach can be used to obtain the parameter $t$ in terms of $\ell$, $N$ and $\epsilon$. First, we diagonalize $A$ on $\phi^{-1}(\mathcal{I})$. Let $\phi^{-1}(\ket{\theta_k})$ be an eigenvector of $A$:
\begin{align}
    A \phi^{-1}(\ket{\theta_k}) = \theta_k\  \phi^{-1}(\ket{\theta_k}).
\end{align}
One obtains a three-term recurrence relation for $P_n(k) = \bra{\theta_k}\ket{n}$ by considering $\bra{\theta_k} \phi \circ A \circ \phi^{-1} \ket{n}$:
\begin{equation}
    \begin{split}
        \theta_k P_n(k) &= (\epsilon q^{\frac{D + \ell}{2} - n} - 1) P_n(k) + q^{\frac{D}{2} + \frac{\ell}{4} - \frac{n}{2} + \frac{1}{4}}\sqrt{[n]_{\sqrt{q}}[\ell - n + 1]_{\sqrt{q}}} P_{n-1}(k) \\ 
        & + q^{\frac{d}{2} + \frac{\ell}{4} - \frac{n}{2} - \frac{1}{4}}\sqrt{[n+1]_{\sqrt{q}}[\ell - n ]_{\sqrt{q}}} P_{n+1}(k) \\
        &= (\epsilon q^{\frac{D + \ell}{2} - n} - 1) P_n(k) + \frac{q^{D/2}}{(q^{1/2} -q^{-1/2})} \sqrt{(1-q^{-n})(q^{\ell-n+1} - 1)} P_{n-1}(k) \\ 
        & +  \frac{q^{D/2}}{(q^{1/2} -q^{-1/2})} \sqrt{(1-q^{-n-1})(q^{\ell-n} - 1)} P_{n+1}(k).
    \end{split}
\end{equation}
\noindent Using 
\begin{align}
    c =  
\left\{
    \begin{array}{ll}
    	-1  & \mbox{if } \epsilon = 0, \\
    	-1/q & \mbox{otherwise, }
    \end{array}
\right.
\end{align}
\noindent and comparing with the three-term recurrence relation of the dual $q$-Krawtchouk polynomials $K_n(q^{-x} + cq^{x-\ell}; c,\ell; \frac{1}{q})$ \cite{koekoek1996askey}, one finds for $x \in \{0, 1,\dots, \ell\}$

\begin{align}\arraycolsep=1.4pt\def\arraystretch{1.8}
    P_n(k) = \left\{
    \begin{array}{ll}
    \sqrt{\mathcal{W}(n,x)} K_{n}(q^{-x} + cq^{x-\ell}; c, \ell ; \frac{1}{q})  & \mbox{if } \epsilon = 0, \\
    (\epsilon)^n \sqrt{\mathcal{W}(n,x)} K_{n}(q^{-x} + cq^{x-\ell}; c, \ell ; \frac{1}{q}) & \mbox{otherwise, }
    \end{array}
\right. 
\end{align}
with
\begin{align}
    \mathcal{W}(n,x) = \frac{(cq^\ell, q^\ell, q^{-1})_x (1-cq^{\ell-2x})c^{-x} q^{x(x-2\ell)} (q^N ; q)_n}{(q,cq;q)_x (1-cq^\ell)(c^{-1};q^{-1})_\ell(q^{-1},q^{-1})_n (cq^\ell)^n},
\end{align}
and
\begin{align}
    \theta_k = 
    \left\{
    \begin{array}{ll}
      \frac{(q^{\frac{D-\ell + 1}{2} + x} - q^{\frac{D+\ell + 1}{2} -x})}{(q-1)} - 1& \mbox{if } \epsilon = 0, \\
   \frac{\epsilon(q^{\frac{D - \ell}{2} + x + 1} - q^{\frac{D + \ell}{2} - x })}{(q-1)} - 1 & \mbox{otherwise. }
    \end{array}
\right.
\end{align}
This leads to the identification
\begin{align}
k = 
        \left\{
    \begin{array}{ll}
       \frac{D + \ell + 1}{2} - x& \mbox{if } \epsilon = 0, \\
    \frac{D - \epsilon \ell + 1 - \epsilon }{2} + \epsilon x & \mbox{otherwise. }
    \end{array}
\right.
\end{align}
We can therefore conclude that
\begin{align}
t =  \frac{D - \ell + 1 - \epsilon }{2}.
\label{corr2}
\end{align}
The multiplicity for each class $(r,t,d)$ is easily obtained from theorem \ref{tototo} using equations \eqref{corr1} and \eqref{corr2}. 

\section{Concluding remarks}
\label{s8}
We have investigated the relation between two $q$-analogues of hypercubes: the dual polar graphs of type $[C_D(q)]$ and the subspace lattices $L_N(q)$. For $q$ prime, we have shown that the restriction of the adjacency matrix of a symplectic dual polar graph to an eigenspace $W(S)$ of the abelian automorphism subgroup $H$ corresponds to the adjacency matrix of a \textit{weighted} subspace lattice. Furthermore, we have used the connection between $L_N(q)$ and $U_{\sqrt{q}}(sl_2)$ to find the irreducible $\mathcal{T}$-submodules of the standard module $V$ of the symplectic dual polar scheme and their multiplicities.

We expect that similar results can be obtained when $q$ is a prime power. We still have in that case a combinatorial model for the graph and the abelian automorphism subgroup $H$ exists. However, $H$ is no longer isomorphic to multiple copies of a cyclic group and further work is required. 

Other types of dual polar graphs are obtained by replacing the symplectic form $\mathfrak{B}$ with a quadratic or hermitiean one. Therefore, they are related to other groups of Lie type, such as $O(2n + 1,q)$, $O^\pm(2n,q)$ and $U(2n,q^2)$. It should prove interesting to check if these distance-regular graphs can be studied using an approach similar to the one we applied here.

As an additional remark, let us note that theorem \ref{toto} and \ref{tototo} and equations \eqref{ra}, \eqref{la} and \eqref{ka} give a recipe for the adjacency and dual adjacency matrices which is independent of the combinatorial model of the graph. In particular, they offer a construction which does not use finite fields $\mathbb{F}_q$ or groups of Lie type. In fact, the matrices $A$ and $A^*$ are well defined this way as long as the multiplicities \eqref{odde1}, \eqref{odde} and \eqref{evene1} of submodules are integers. Interestingly, this does not seem to require $q$ to be a prime power. For instance, we find an expression for $A$ which corresponds to the adjacency matrix of an hypercube when taking $q \rightarrow 1$. It remains to explore whether this construction yields matrices $A$ and $A^*$ associated to distance-regular graphs for other values of $q$ which are not prime powers. 

Finally, it is our intent to use the results presented here to study the entanglement of free fermions on these graphs we have investigated. 

\section*{Acknowledgements}

The authors are grateful to Paul Terwilliger for comments. PAB holds a scholarship from the Natural Sciences and Engineering Research Council of Canada (NSERC). The research of LV is supported in part by a Discovery Grant from NSERC.

\bigskip

\noindent \textbf{Declarations of interest}: none.

\appendix
\section{proof of theorem \ref{toto}}

First, we fix $S$ and consider the computation of
\begin{align}
     A_{i,j,[\mathcal{C}],[\mathcal{C}']} = {}_i\bra{[\mathcal{C}],S} A \ket{[\mathcal{C}'],S}_j.
\end{align}
We can use lemma $3.9$ of \cite{thini} to identify the entries $A_{i,j,[\mathcal{C}],[\mathcal{C}']} \neq 0$. It gives
\begin{align}
    E_m^* A E_n^* = 0 \quad \text{if} \quad |m - n| >1,
\end{align}
where $E_i^*$ is the projector onto the $i^{\text{th}}$ neighborhood of $x_0$ defined in \eqref{defdualP}. The construction of the eigenvectors of $H$ also implies that
\begin{align}
    E_m^* \ket{[\mathcal{C}],S}_n = \delta_{mn}\ket{[\mathcal{C}],S}_n.
\end{align}
We thus deduce that $A_{i,j,[\mathcal{C}],[\mathcal{C}']}$ is only non-zero when $|i-j|\leq 1$. Another condition arises from the development
\begin{equation}
    \begin{split}
        A_{i,j,[\mathcal{C}],[\mathcal{C}']} & = \sum_{x,y \in X} {}_i\bra{[\mathcal{C}],S}\ket{x}\bra{x} A \ket{y}\bra{y}\ket{[\mathcal{C}'],S}_j.
        \label{decA}
    \end{split}
\end{equation}
Terms in the sum can only be non-zero when $x$ and $y$ have respectively the intersections $(\mathcal{C},0)x_i \cap x_0$ and $(\mathcal{C}',0)x_j \cap x_0$ with $x_0$. The coefficient $\bra{x}A\ket{y}$ also requires  $\text{dim}(x \cap y) = D - 1$. Then, from the following inequality
\begin{align}
    \text{dim}(x\cap y) \leq  D - (\text{max}(D-i,D-j)  - \text{dim}(x\cap y \cap x_0)),
\end{align}
one concludes that \eqref{decA} can only be non-zero in four cases:

\begin{itemize}
    \item[\textbf{(I)}] $i = j$ and $(\mathcal{C},0)x_i \cap x_0 =(\mathcal{C}',0)x_j \cap x_0$;
    \item[\textbf{(II)}] $i = j$ and $\text{dim}\left((\mathcal{C},0)x_i \cap (\mathcal{C}',0)x_j \cap x_0\right) = D-i -1$;
    \item[\textbf{(III)}] $i = j-1$ and $(\mathcal{C},0)x_i \cap x_0 $ \textit{covers} $ (\mathcal{C}',0)x_j \cap x_0$;
    \item[\textbf{(IV)}]$i = j+1$ and $(\mathcal{C},0)x_i \cap x_0 $ is \textit{covered} by $ (\mathcal{C}',0)x_j \cap x_0$.
\end{itemize}
We shall consider these cases one by one.

\bigskip

\noindent \textbf{Case (I)}: Consider $A_{i,i,[\mathcal{C}],[\mathcal{C}]}$. We can use the fact that $\rho(P)$ is in the centralizer of $\mathcal{T}$ to show that
\begin{equation}
    \begin{split}
        A_{i,i,[\mathcal{C}],[\mathcal{C}]} &=   {}_i\bra{[\mathds{1}],S'}\rho((\mathcal{C},0)^{-1}) A \rho((\mathcal{C},0))\ket{[\mathds{1}],S'}_i,\\
        &= {}_i\bra{[\mathds{1}],S'} A\ket{[\mathds{1}],S'}_i,
    \end{split}
\end{equation}
where $S' = \mathcal{C}^t S \mathcal{C}$. Expanding $\ket{[\mathds{1}],S'}_i$ with the help of definition \eqref{eigenH} yields
\begin{equation}
\begin{split}
    {}_i\bra{[\mathds{1}],S'} A\ket{[\mathds{1}],S'}_i &= \sum_{\mathcal{F},\mathcal{F}' \in \text{Sym}_{D,i}} \chi_{S'}^*(\mathcal{F})\chi_{S'}(\mathcal{F}') \bra{x_i}\rho((\mathds{1},\mathcal{F})^{-1})A\rho((\mathds{1},\mathcal{F}'))\ket{x_i},\\
    &= \sum_{\mathcal{F},\mathcal{F}' \in \text{Sym}_{D,i}} \sum_{\substack{x \in X : \\
    \text{dist}(x, x_i) = 1}} \chi_{S'}^*(\mathcal{F})\chi_{S'}(\mathcal{F}') \bra{x_i}\rho((\mathds{1},\mathcal{F}'- \mathcal{F})) \ket{x}.
     \end{split}
     \end{equation}
\noindent We have
\begin{align}
\bra{x_i}\rho((\mathds{1},\mathcal{F}'- \mathcal{F})) \ket{x} = 
    \left\{
    \begin{array}{ll}
    	1  & \mbox{if } x = (\mathds{1},\mathcal{F}'- \mathcal{F})^{-1} x_i, \\
    	0 & \mbox{otherwise. }
    \end{array}
\right.
\label{cocoef}
\end{align}
Since $\text{dim}(x\cap x_i) = D-1$, we deduce from the action of $H$ on $\mathbb{F}_q^{2D}$ that we can only get $ x = (\mathds{1},\mathcal{F}'- \mathcal{F})^{-1} x_i$ when $\text{rank}(\mathcal{F}' - \mathcal{F}) = 1$. Therefore,
\begin{equation}
\begin{split}
    {}_i\bra{[\mathds{1}],S'} A\ket{[\mathds{1}],S'}_i 
    &= \sum_{\substack{\mathcal{F},\mathcal{F}' \in \text{Sym}_{D,i}\\
    \text{rank}(\mathcal{F} - \mathcal{F}') = 1}}  \chi_{S'}^*(\mathcal{F})\chi_{S'}(\mathcal{F}'), 
    \\ &= \sum_{\substack{\mathcal{F},\mathcal{F}' \in \text{Sym}_{D,i}\\
    \text{rank}(\mathcal{F} - \mathcal{F}') = 1}}  q^{-\frac{i(i + 1)}{4}}\chi_{S'}(\mathcal{F}' - \mathcal{F}).
     \end{split}
     \end{equation}
Next, we define $\mathcal{F}_\pm = \mathcal{F} \pm \mathcal{F}' \in \text{Sym}_{D,i}$. The condition $\text{rank}(\mathcal{F}_-) = 1$ implies that the submatrices $(\mathcal{F}_-)_{ D-i< m,n \leq D}$ (i.e. the non-zero part of the matrices $\mathcal{F}_-$) are given by
\begin{align}
    \{ av v^t : a \in \mathbb{F}_q\backslash\{0\}, \ v \in P(\mathbb{F}_q^{i}) \},
\end{align}
where $P(\cdot)$ refers to the projective space. Let $\Bar{S}' = ({S}')_{D-i < m,n \leq D} $ be the non-zero block of $S'$. We find
     \begin{equation}
   \begin{split}
 {}_i\bra{[\mathds{1}],S'} A\ket{[\mathds{1}],S'}_i &= \sum_{ \mathcal{F}_+\in \text{Sym}_{D,i} } \sum_{ a \in \mathbb{F}_q\backslash\{0\}} \sum_{v \in P(\mathbb{F}_q^i)} q^{-\frac{i(i + 1)}{4}}\chi_{S'}(\mathcal{F}_-),\\
     & = \sum_{ a \in \mathbb{F}_q\backslash\{0\}} \sum_{v \in P(\mathbb{F}_q^i)}  e^{\frac{2 \pi i}{q} a \text{tr}(\Bar{S}' vv^t)},\\
     &= \sum_{ a \in \mathbb{F}_q\backslash\{0\}} \sum_{v \in P(\mathbb{F}_q^i)}  e^{\frac{2 \pi i}{q} a v^t\Bar{S}' v}.
     \end{split}
\end{equation}
Splitting the summation on $v \in P(\mathbb{F}_q^i)$ in the two cases $v^t (\Bar{S}')^tv = 0$ and $v^t (\Bar{S}')^tv \neq 0$, one finds
\begin{equation}
    \begin{split}
  {}_i\bra{[\mathds{1}],S'} A\ket{[\mathds{1}],S'}_i&= \sum_{\substack{v \in P(\mathbb{F}_q^{ i}) \\ v^t (\Bar{S}')^tv = 0 }} (q-1) + \sum_{\substack{v \in P(\mathbb{F}_q^{ i}) \\ v^t (\Bar{S}')^tv \neq 0 }}\sum_{\substack{a \in \mathbb{F}_q\backslash \{0\}}}e^{\frac{2 \pi i}{q}a v^t (\Bar{S}')^t v},\\
    &= \sum_{\substack{v \in P(\mathbb{F}_q^{ i}) \\ v^t (\Bar{S}')^tv = 0 }} (q-1) - \sum_{\substack{v \in P(\mathbb{F}_q^{ i}) \\ v^t (\Bar{S}')^tv \neq 0 }} 1.
\end{split}
\end{equation}
\noindent For $S=0$, this is easy to compute and we get
\begin{equation}
    \begin{split}
  {}_i\bra{[\mathds{1}],0} A\ket{[\mathds{1}],0}_i&= q^i - 1.
\end{split}
\end{equation}
\noindent For $S \neq 0$, it is a matter of determining how many $v \in P(\mathbb{F}_q^i)$ fall in the cases $v^t (\Bar{S}')^tv = 0$ and $v^t (\Bar{S}')^tv \neq 0$ respectively. To do so, we consider $\Bar{S}'$ as the matrix representation of a quadratic form over $\mathbb{F}_q^i$. For any such form, it is known \cite{schmidt2006equations} that there exists a non-singular $i \times i$ matrix $\Upsilon \in GL(i,q)$ such that
\begin{align}
    \Upsilon^t \Bar{S}' \Upsilon = 
 \begin{pmatrix}
0 &  0\\
0 & Q
\end{pmatrix},
\label{deftype1}
\end{align}
where $Q$ is a non-singular $k \times k $ matrix, $k = \text{rank}(\Bar{S}')$. In particular, $Q$ defines a non-degenerate quadratic form over $\mathbb{F}_q^k$. We can thus use the following result concerning non-degenerate quadratic forms \cite{ LEEP1999157,schmidt2006equations}:
\begin{align}
   |\{v \in \mathbb{F}_q^k : v^t Q v = 0 \}| = 
q^{k-1} + \epsilon (q-1)q^{\frac{k-2}{2}}
\label{Nnn}
\end{align}
where the \textit{type} $\epsilon$ of $Q$ is defined as 
\begin{align}
    \epsilon  =
    \left\{
    \begin{array}{ll}
    	1  & \mbox{if }  k \text{ is even and } (-1)^{\frac{n}{2}} \text{det}({Q}) \text{ is a square in }\mathbb{F}_q, \\
    	-1  & \mbox{if }  k \text{ is even and } (-1)^{\frac{n}{2}} \text{det}({Q}) \text{ is a non-square in }\mathbb{F}_q, \\
    	0  & \mbox{if } k \text{ is odd.}
    \end{array}
\right.
\end{align}
In the following, we also take  $\epsilon = 1$ when $S = 0$ to keep the notation compact.  From \eqref{Nnn}, we can deduce the number $\mathcal{N}_{i,S}$ of isotropic lines in the space $\mathbb{F}_q^i$ equipped with the quadratic form $\Bar{S}'$:
\begin{align}
    \mathcal{N}_{i,S} = \frac{q^{i-D+N}}{q-1}(q^{D-N-1} + \epsilon(q-1)q^{\frac{D-N}{2} - 1}) - \frac{1}{q-1},
\end{align}
\noindent where $N = \text{dim}(\text{ker}(S))$. From there, we recall that the number of lines in $\mathbb{F}_q^i$ is $\frac{q^i - 1}{q-1}$ and find
\begin{equation}
    \begin{split}
    {}_i\bra{[\mathds{1}],S'} A\ket{[\mathds{1}],S'}_i &= q \mathcal{N}_{i,S}- \frac{q^i - 1}{q-1}.
\end{split}
\end{equation}

\noindent \textbf{Case (II)}: We need the following lemma.

\begin{lemma}
If two subspaces $x$ and $y$ in $X$ are such that $\text{dim}(x \cap x_0) = \text{dim}( y \cap x_0) = D-i $ and $\text{dim}(x \cap y \cap x_0) = D- i -1$, then $\text{dist}(x,y) \geq 2$.
\end{lemma}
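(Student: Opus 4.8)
The plan is to prove the equivalent statement $\dim(x \cap y) \le D-2$, which by the definition of distance in \eqref{defdista} is exactly $\text{dist}(x,y) \ge 2$. Since the hypotheses give $\dim(x \cap y \cap x_0) = D-i-1 < D-i = \dim(x \cap x_0)$, the subspaces $x$ and $y$ cannot be equal, so the case $\text{dist}(x,y)=0$ is already excluded; it therefore suffices to rule out $\text{dist}(x,y)=1$, i.e. $\dim(x\cap y)=D-1$.

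The key structural input is that $x$, $y$ and $x_0$ are all maximal isotropic, hence Lagrangian: with respect to the nondegenerate symplectic form $\mathfrak{B}$ they are self-orthogonal, $x^\perp = x$, $y^\perp = y$ and $x_0^\perp = x_0$, where $V^\perp$ denotes the symplectic orthogonal. I would use the standard duality identities $\dim V^\perp = 2D - \dim V$, $(A+B)^\perp = A^\perp \cap B^\perp$ and $(A\cap B)^\perp = A^\perp + B^\perp$, all valid for arbitrary subspaces $A,B$ precisely because $\mathfrak{B}$ is nondegenerate.

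First I would assume, for contradiction, that $\dim(x\cap y)=D-1$ and compute $\dim\big((x+y)\cap x_0\big)$ in two ways. On one hand, taking perps and using the Lagrangian property gives $(x+y)\cap x_0 = \big((x\cap y)+x_0\big)^\perp$, so its dimension equals $2D - \dim\big((x\cap y)+x_0\big)$; inclusion--exclusion with $\dim(x\cap y)=D-1$, $\dim x_0 = D$ and $\dim\big((x\cap y)\cap x_0\big)=D-i-1$ yields $\dim\big((x\cap y)+x_0\big)=D+i$, hence $\dim\big((x+y)\cap x_0\big)=D-i$. On the other hand, both $x\cap x_0$ and $y\cap x_0$ are contained in $(x+y)\cap x_0$, so this space contains $(x\cap x_0)+(y\cap x_0)$, whose dimension is $(D-i)+(D-i)-(D-i-1)=D-i+1$ by inclusion--exclusion together with $(x\cap x_0)\cap(y\cap x_0)=x\cap y\cap x_0$. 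The inclusion then forces $D-i+1 \le D-i$, the desired contradiction.

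The dimension bookkeeping is routine; the main obstacle is spotting the right object to double-count, namely $(x+y)\cap x_0$, and recognising that symplectic duality turns it into the perp of $(x\cap y)+x_0$ exactly because all three subspaces are Lagrangian. Once this duality is in place the two counts differ by one and the argument closes immediately; the only point that needs a remark is that the identities used require nothing beyond nondegeneracy of $\mathfrak{B}$, which holds by construction.
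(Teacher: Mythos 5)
Your proof is correct, and it takes a genuinely different route from the paper's. The paper argues by explicit construction: assuming $\text{dist}(x,y)=1$, it writes $x=(x\cap y)\oplus\text{span}_{\mathbb{F}_q}\{u_{D-i}\}$ and $y=(x\cap y)\oplus\text{span}_{\mathbb{F}_q}\{u'_{D-i}\}$, notes that the complementary vectors $u_{D-i},u'_{D-i}$ lie in $x_0$ (because $\dim(x\cap x_0)=\dim(y\cap x_0)=D-i$ exceeds $\dim(x\cap y\cap x_0)=D-i-1$), so that $\mathfrak{B}(u_{D-i},u'_{D-i})=0$, and concludes that $z=x\oplus\text{span}_{\mathbb{F}_q}\{u'_{D-i}\}$ is an isotropic subspace of dimension $D+1$ containing both $x$ and $y$, contradicting their maximality. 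You instead exploit the Lagrangian property $x=x^\perp$, $y=y^\perp$, $x_0=x_0^\perp$ together with the duality identities for the nondegenerate form to identify $(x+y)\cap x_0$ with $\left((x\cap y)+x_0\right)^\perp$, and then double-count its dimension: duality plus $\dim(x\cap y)=D-1$ gives $D-i$, while the containment $(x\cap x_0)+(y\cap x_0)\subseteq (x+y)\cap x_0$ forces at least $D-i+1$. The two arguments invoke maximality in equivalent guises — the paper as ``no strictly larger isotropic subspace exists,'' you as self-perpendicularity, which coincide for a symplectic form since maximal isotropic means $\dim x = D = \dim x^\perp$. The paper's version is more elementary and makes the obstruction concrete (the would-be isotropic extension $z$), but it quietly relies on the fact that the complement vectors can be chosen inside $x_0$, a point your argument never needs: your proof is coordinate-free, requires no basis choices, and reduces the lemma to transparent dimension bookkeeping, at the modest cost of importing the standard perp identities $(A+B)^\perp=A^\perp\cap B^\perp$ and $(A\cap B)^\perp=A^\perp+B^\perp$ for nondegenerate forms.
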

\begin{proof}
 Proving that $\text{dist}(x,y) \neq 0 $ is trivial. Now, assume that the conditions in the wording of the lemma are satisfied and that $\text{dist}(x,y) = 1$. Then 
\begin{align}
    x = \text{span}_{\mathbb{F}_q}\{u_1, \dots, u_{D-i-1}\} \oplus \text{span}_{\mathbb{F}_q}\{ u_{D-i}\} \oplus \text{span}_{\mathbb{F}_q}\{u_{D-i+1}, \dots, u_{D}\} 
\end{align}
\noindent and
\begin{align}
    y = \text{span}_{\mathbb{F}_q}\{u_1, \dots, u_{D-i-1}\} \oplus \text{span}_{\mathbb{F}_q}\{ u_{D-i}'\} \oplus \text{span}_{\mathbb{F}_q}\{u_{D-i+1}, \dots, u_{D}\},
\end{align}
\noindent with $u_j \in x_0$ for $j < D-i$ and $u_j \notin x_0$ for $j > D-i$. Since  $\text{dim}(x \cap x_0) = \text{dim}( y \cap x_0) = D-i $, $u_{D-i}$ and $u'_{D-i}$ are both in $x_0$. Thus $\mathfrak{B}(u_{D-i},u'_{D-i}) = 0$ and we find that
\begin{align}
    z = x \oplus \text{span}_{\mathbb{F}_q}\{ u_{D-i}'\} = y \oplus \text{span}_{\mathbb{F}_q}\{ u_{D-i}\}
\end{align}
\noindent is an isotropic subspace. This is a contradiction, since $x$ and $y$ are maximal and $x,y \subseteq z$. Since both $\text{dist}(x,y) = 0$ and $\text{dist}(x,y) = 1$ are not possible, the result follows.
\end{proof}
Using this lemma and the conditions of case (II), we find that vectors $\ket{x}$ and $\ket{y}$ such that $\bra{x}\ket{[\mathcal{C}],S}_i \neq 0$ and  $\bra{y}\ket{[\mathcal{C}'],S}_i \neq 0$ are associated to subspaces verifying
\begin{align}
    \text{dist}(x,y) \geq 2.
\end{align}
In particular, $\bra{x}A\ket{y} = 0$. Therefore,
\begin{align}
     {}_i\bra{[\mathcal{C}],S} A \ket{[\mathcal{C}'],S}_i = 0,
\end{align}
when $[\mathcal{C}] \neq [\mathcal{C}']$. 

\bigskip

\noindent \textbf{Case (III)}: We now focus on
\begin{align}
    A_{i,i+1,[\mathcal{C}],[\mathcal{C}']} =  {}_i\bra{[\mathcal{C}],S} A \ket{[\mathcal{C}'],S}_{i+1}
\end{align}
such that
\begin{align}
     (\mathcal{C}',0)x_{i+1} \cap x_0 \subset (\mathcal{C},0)x_i\cap x_0.
     \label{inck}
\end{align} By construction, we have
\begin{align}
    \rho((\mathcal{C},0)^{-1})\ket{[\mathcal{C}'],S}_{i+1} = \ket{[\mathcal{C}^{-1}\mathcal{C}'],S'}_{i+1}, \quad \rho((\mathcal{C},0)^{-1})\ket{[\mathcal{C}],S}_{i} = \ket{[\mathds{1}],S'}_{i},
\end{align}
where $S' = \mathcal{C}^t S \mathcal{C} \in \text{Sym}_{D,i}$ and $(\mathcal{C}^{-1}\mathcal{C}')^t S' (\mathcal{C}^{-1}\mathcal{C}') \in \text{Sym}_{D,{i+1}}$. From \eqref{inck}, we also note that
\begin{align}
    (\mathcal{C}^{-1}\mathcal{C}',0)x_{i+1}\cap x_0 \subset x_i \cap x_0 = \text{span}_{\mathbb{F}_q}\{e_1, \dots e_{D-i}\}.
    \label{ink2}
\end{align}
We can therefore construct a non-singular matrix
\begin{align}
    \mathcal{C}'' = 
    \begin{pmatrix}
({\mathcal{C}''})_{1 \leq m,n \leq D-i} &  0\\
0 & \mathds{1}_{i \times i }
\end{pmatrix},
\end{align}
verifying
\begin{align}
    ({\mathcal{C}''})_{1 \leq m \leq D-i-1, 1 \leq n \leq D-i} = ({\mathcal{C}'}^{-1}\mathcal{C})_{1 \leq m \leq D-i-1, 1 \leq n \leq D-i}.
\end{align}
The following relations are verified
\begin{align}
    \rho((\mathcal{C}'', 0)) \ket{[\mathcal{C}^{-1}\mathcal{C}'],S'}_{i+1} = \ket{[\mathds{1}],S'}_{i+1},
\end{align}
\begin{align}
    \rho((\mathcal{C}'', 0))\ket{[\mathds{1}],S'}_{i} = \ket{[\mathds{1}],S'}_{i},
\end{align}
and we see that
\begin{equation}
    \begin{split}
        A_{i,i+1,[\mathcal{C}],[\mathcal{C}']} &=  {}_i\bra{[\mathds{1}],S'}\rho((\mathcal{C},0))A \rho((\mathcal{C},0)^{-1})\ket{[\mathcal{C}^{-1}\mathcal{C}'],S'}_{i+1}, \\
        &=  {}_i\bra{[\mathds{1}],S'}A\ket{[\mathcal{C}^{-1}\mathcal{C}'],S'}_{i+1}, \\
         &=  {}_i\bra{[\mathds{1}],S'}A\rho((\mathcal{C}'', 0))^{-1}\rho((\mathcal{C}'', 0))\ket{[\mathcal{C}^{-1}\mathcal{C}'],S'}_{i+1}, \\
        & = {}_i\bra{[\mathds{1}],S'} A \ket{[\mathds{1}],S'}_{i+1}.
    \end{split}
\end{equation}
\noindent Expanding the right side, we find
\begin{equation}
\begin{split}
    {}_i\bra{[\mathds{1}],S'} A\ket{[\mathds{1}],S'}_{i+1} &= \sum_{\substack{\mathcal{F} \in \text{Sym}_{D,i}, \\
    \mathcal{F}' \in \text{Sym}_{D,i+1}}} \chi_{S'}^*(\mathcal{F})\chi_{S'}(\mathcal{F}') \bra{x_i}(\mathds{1},\mathcal{F})^{-1}A(\mathds{1},\mathcal{F}')\ket{x_{i+1}},\\
    &= \sum_{\substack{\mathcal{F} \in \text{Sym}_{D,i}, \\
    \mathcal{F}' \in \text{Sym}_{D,i+1}}} \sum_{\substack{x \in X : \\
    \text{dist}(x, x_{i + 1}) = 1}} \chi_{S'}^*(\mathcal{F})\chi_{S'}(\mathcal{F}') \bra{x_i}(\mathds{1},\mathcal{F}'- \mathcal{F}) \ket{x}.
\end{split}
\end{equation}
The terms in the sum are non-zero only when $(\mathds{1},\mathcal{F}' - \mathcal{F})x = x_i$. Given a pair $\mathcal{F}$ and $\mathcal{F}'$, one can check that this happens for some unique $x$ verifying $ \text{dist}(x, x_{i + 1}) = 1$ as long as
\begin{align}
    (\mathcal{F})_{ D-i< m,n \leq D} = (\mathcal{F}')_{ D-i< m,n \leq D}.
    \label{cii}
\end{align} 
In particular, there is a unique matrix $\mathcal{F} \in \text{Sym}_{D,i}$ verifying \eqref{cii} with respect to a given $\mathcal{F}'$. We denote this matrix $\mathcal{F}'_i$ and thus find
    \begin{equation}
    \begin{split}
    {}_i\bra{[\mathds{1}],S'} A\ket{[\mathds{1}],S'}_{i+1}  &= \sum_{\mathcal{F}' \in \text{Sym}_{D,i+1}} q^{-\frac{i(i+1)}{4}}q^{-\frac{(i+1)(i+2)}{4}}e^{\frac{2 \pi i}{q} \text{tr}(S'(\mathcal{F}' - \mathcal{F}'_i)) }.
    \end{split}
    \label{77}
    \end{equation}
Yet, the conditions $(\mathcal{F}' - \mathcal{F}'_i)_{ D-i< m,n \leq D} = 0$ and $S' \in \text{Sym}_{D,i}$ are sufficient to show that $\text{tr}(S'(\mathcal{F}' - \mathcal{F}'_i)) = 0$. Therefore, we get
 \begin{equation}
    \begin{split}
    {}_i\bra{[\mathds{1}],S'} A\ket{[\mathds{1}],S'}_{i+1}  &= \sum_{\mathcal{F}' \in \text{Sym}_{D,i+1}} q^{-\frac{i(i+1)}{4}}q^{-\frac{(i+1)(i+2)}{4}}, \\
    &= q^{\frac{(i+1)}{2}}.
    \end{split}
    \label{88}
\end{equation}
\bigskip
\noindent \textbf{Case (IV)}: We deduce that
\begin{equation}
    \begin{split}
         A_{i+1,i,[\mathcal{C}],[\mathcal{C}']} &=  {}_{i+1}\bra{[\mathcal{C}],S} A \ket{[\mathcal{C}'],S}_{i}, \\
         & = q^{\frac{i+1}{2}}.
    \end{split}
\end{equation}
from case (III) since $A$ is a symmetric matrix. To sum up, we found
\begin{align}
 {}_{i}\bra{[\mathcal{C}],S} A \ket{[\mathcal{C}'],S}_{j} = 
     \left\{
    \begin{array}{ll}
    	q \mathcal{N}_{i,S}- \frac{q^i - 1}{q-1}  & \mbox{if } {}_{i}\bra{[\mathcal{C}],S} \phi^{-1} \circ \mathds{1} \circ \phi \ket{[\mathcal{C}'],S}_{j} = 1, \\
    q^{\frac{i+1}{2}}  & \mbox{if } {}_{i}\bra{[\mathcal{C}],S} \phi^{-1} \circ L \circ \phi \ket{[\mathcal{C}'],S}_{j} = 1, \\
    q^{\frac{i}{2}}  & \mbox{if } {}_{i}\bra{[\mathcal{C}],S} \phi^{-1} \circ R \circ \phi \ket{[\mathcal{C}'],S}_{j} = 1, \\
    	0   & \mbox{otherwise. }
    \end{array}
\right.
\end{align}
Since these entries fit those of \eqref{bigrez}, the first part of the theorem is proved. Next, we consider $A^*|_{W(S)}$. It is known that \cite{WORAWANNOTAI2013443}
\begin{align}
    \bra{x} A^* \ket{y} = \delta_{x,y}\theta_{\text{dist}(x,x_0)}^*,
\end{align}
where
\begin{align}
   \theta_k^*  = \frac{-q(q^{D-1} +1)}{q-1} + \frac{q(q^{D-1} +q)(q^{D} + 1)}{2(q-1)}q^{-k}.
\end{align}
Therefore, we find
\begin{equation}
   \begin{split}
       {}_{i}\bra{[\mathcal{C}],S} A^* \ket{[\mathcal{C}'],S}_{j} &=  {}_{i}\bra{[\mathcal{C}],S} \sum_{k = 0 }^D \theta_k^* E_k^* \ket{[\mathcal{C}'],S}_{j}, \\
       &=\delta_{i,j}\delta_{[\mathcal{C}],[\mathcal{C}']}\theta^*_{i}
   \end{split} 
\end{equation}
and we can use
\begin{align}
     {}_{i}\bra{[\mathcal{C}],S} \phi^{-1} \circ K \circ \phi \ket{[\mathcal{C}'],S}_{j} =\delta_{i,j}\delta_{[\mathcal{C}],[\mathcal{C}']} q^{\frac{N}{2} - D + i}
\end{align}
to recover equation \eqref{medrez}.

\end{document}